\newcommand{\function}[2]{#1\left(#2\right)}
\newcommand{\p}[1]{\mathbf{P}\left(#1\right)}
\newenvironment{proof}{\textbf{Proof: }}{$\hfill \square \\$}
\newtheorem{lemma}{Lemma}[section]
\newtheorem{thm}{Theorem}[section]
\newtheorem{defn}{Definition}[section]
\newcommand{\ind}[1]{\mathbf{1}_{#1}}
\newcommand{\argmax}{\operatornamewithlimits{\textrm{argmax}}}
\newcommand{\lsup}{\operatornamewithlimits{\overline{\lim}}}
\newcommand{\linf}{\operatornamewithlimits{\underline{\lim}}}
\newcommand{\sargmax}{\operatornamewithlimits{\textrm{sargmax}}}
\newcommand{\largmax}{\operatornamewithlimits{\textrm{largmax}}}
\begin{document}
\title{\textsc{A Continuous Mapping Theorem for the Smallest Argmax Functional}}
\author{\begin{tabular}{c}
Emilio Seijo and Bodhisattva Sen\\
Columbia University
\end{tabular}}
\date{}
\maketitle
\begin{abstract}
This paper introduces a version of the argmax continuous mapping theorem that applies to M-estimation problems in which the objective functions converge to a limiting process with multiple maximizers. The concept of the smallest maximizer of a function in the $d$-dimensional Skorohod space is introduced and its main properties are studied. The resulting continuous mapping theorem is applied to three problems arising in change-point regression analysis. Some of the results proved in connection to the $d$-dimensional Skorohod space are also of independent interest.
\end{abstract}

\tableofcontents


\tableofcontents

\section{Introduction}\label{s1}
Many estimators in statistics are defined as the maximizers of certain stochastic processes, called objective functions. This procedure for computing estimators is known as M-estimation and is quite common in modern statistics. A standard way to find the asymptotic distribution of a given M-estimator, is to obtain the limiting law of the (appropriately normalized) objective function and then apply the so-called argmax continuous mapping theorem (see Theorem 3.2.2, page 286 of \cite{vw} for a quite general version of this result). Chapter 3.2 in \cite{vw} gives an excellent account of M-estimation problems and applications of the argmax continuous mapping theorem.

Despite its proven usefulness in a wide range of applications, there are some M-estimation problems that cannot be solved by an application of the usual argmax continuous mapping theorem. This is particularly true when the objective functions converge in distribution to the law of some process that admits multiple maximizers. This situation arises frequently in problems concerning change-point estimation in regression settings. In these problems, the estimators are usually maximizers of processes that converge in the limit to two-sided, compound Poisson processes that have a complete interval of maximizers. See, for instance, \cite{koss} (Section 14.5.1, pages 271--277), \cite{lmm}, \cite{koso}, \cite{pons} and \cite{seseboot}. This issue has been noted before by several authors, such as \cite{fe04}.

The main goal of this paper is to derive a version of the argmax continuous mapping theorem specially taylored for situations like the one described in the previous paragraph. A distinctive feature of the argmax continuous mapping theorem in this setup is that it requires the weak convergence, not only of the objective functions, but also of some associated {\it pure jump processes}. Although this requirement has been overlooked by some authors in the past (we discuss these omissions in Section \ref{s4}), its necessity can be easily seen; see Section \ref{s4s1} for an example.


To illustrate the situations on which our results are applicable, we start with the following simple problem that arises in least squares change-point regression. Detailed accounts of this type of models can be found in \cite{koss} (Section 14.5.1, pages 271--277), \cite{lmm} and \cite{seseboot}. In its simplest form the model considers a random vector $X=(Y,Z)$ satisfying the following relation:
\begin{equation}\label{ec1ex1}
Y = \alpha_0 \mathbf{1}_{Z\leq \zeta_0} + \beta_0 \mathbf{1}_{Z> \zeta_0} + \epsilon,
\end{equation}
where $Z$ is a continuous random variable, $\alpha_0 \neq \beta_0 \in \mathbb{R}$, $\zeta_0 \in [c_1,c_2] \subset \mathbb{R}$ and $\epsilon$ is a continuous random variable, independent of $Z$ with zero expectation and finite variance $\sigma^2 > 0$. The parameter of interest is $\zeta_0$, the change-point. Given a random sample from this model, the {\it least squares estimator} $\hat{\theta}_n$ of $\theta_0 = (\zeta_0,\alpha_0, \beta_0)\in\Theta:=[c_1,c_2]\times\mathbb{R}^2$ is obtained by maximizing the criterion function

\begin{equation}\nonumber\function{M_n}{\theta} := - \frac{1}{n} \sum_{i=1}^n \left( Y_i - \alpha \mathbf{1}_{Z_i \leq \zeta} + \beta \mathbf{1}_{Z_i > \zeta} \right)^2,
\end{equation}
i.e., 
\begin{eqnarray}
\hat{\theta}_n := (\hat{\zeta}_n,\hat{\alpha}_n,\hat{\beta}_n) = \sargmax_{\theta \in \Theta} \left\{ M_n(\theta) \right\}, \label{ecfinal}
\end{eqnarray}
where sargmax denotes the maximizer with the smallest $\zeta$ value. This distinction is made as there is no unique maximizer for $\zeta$, in fact, for any $\alpha, \beta$, $M_n(\cdot,\alpha,\beta)$ is constant on every interval $[Z_{(j)},Z_{(j+1)})$, where $Z_{(j)}$ stands for the $j$-th order statistic. It can be shown, see either \cite{koss} (Section 14.5.1, pages 271--277) or  \cite{seseboot}, that $n(\hat{\zeta}_n-\zeta_0)$ converges in distribution to the smallest maximizer a two-sided, compound Poisson process. The convergence results in this paper, Theorems \ref{t1} and \ref{t2}, can, in particular, be applied to derive the asymptotic distribution of this estimator (see Section \ref{s4s4}).

Our results will be applicable to M-estimation problems for which the objective function takes arguments in some compact rectangle $K\subset\mathbb{R}^{d}$, $d \ge 1$. We focus on functions belonging to the Skorohod space $\mathcal{D}_K$ as defined in \cite{neu}. The elements of $\mathcal{D}_K$ are functions with finite ``quadrant limits'' (generalized one-sided limits) and are ``continuous from above'' (generalization of right-continuity) at each point in $K$. In Section \ref{s2} we describe the Skorohod space $\mathcal{D}_K$ in details and state some fundamental properties of the sargmax functional. Some of the results developed in this connection can also be of independent interest. In Section \ref{s3} we prove a version of the continuous mapping theorem for the sargmax functional for elements of $\mathcal{D}_K$ which are c\'adl\'ag in the first component and jointly continuous on the last $d-1$. In Section \ref{s4s1} we describe an example that illustrates the necessity of the convergence of the associated pure jump processes in the results of Section \ref{s3}. Finally, in Section \ref{s4} we apply the theorems of Section \ref{s3} to the change-point regression problem described above and to the estimation of a change-point in time and in a covariate in the Cox-proportional hazards model.

\section{The Skorohod space $\mathcal{D}_K$}\label{s2}
\subsection{Definition and basic properties}\label{s2s1}
We start by recalling the Skorohod space as discussed in \cite{neu}. To simplify notation, we write the coordinates of any vector in $\mathbb{R}^d$ with upper indices. We consider a compact rectangle $K = [a,b] = [a^1,b^1] \times \cdots \times [a^d,b^d]$ for some $a < b\in\mathbb{R}^d$ with the inequality holding componentwise. For any space $\mathbb{R}^m$ we will write $|\cdot|$ for the Euclidian norm (although the $\mathbb{L}^\infty$-norm is used in \cite{neu}, the results in there hold if one uses the Euclidian norm instead). For $k\in\{1,\ldots,d\}$, $t\in [a^k,b^k]$ and $s\in\{a^k,b^k\}$ we write:
\begin{eqnarray*}
I_k (s,t) &:=& \left\{\begin{array}{cl}
[a^k, t) & \textrm{ if } s=a^k,\\
(t,b^k] & \textrm{ if } s=b^k.
\end{array}\right. \label{ec1i}\\
J_k (s,t) &:=& \left\{\begin{array}{cl}
[a^k,t) & \textrm{ if } s=a^k \textrm{ and } t< b^k,\\
\left[a^k,b^k\right] & \textrm{ if } s=a^k \textrm{ and } t = b^k,\\
\emptyset & \textrm{ if } s=b^k \textrm{ and } t = b^k,\\
\left[t,b^k\right] & \textrm{ if } s=b^k \textrm{ and } t < b^k.
\end{array}\right.\label{ec2i}
\end{eqnarray*}
and for any $\displaystyle \rho\in\mathcal{V}:=\prod_{k=1}^d \{a^k,b^k\},x=(x^1,\ldots,x^d)\in\mathbb{R}^d$,
\begin{eqnarray}
Q(\rho,x) &:=& \prod_{k=1}^d I_k(\rho^k,x^k)\nonumber,\\
\tilde{Q}(\rho,x) &:=& \prod_{k=1}^d J_k(\rho^k,x^k)\nonumber.
\end{eqnarray}

\noindent {\bf Remark:} Some properties of the sets $\tilde{Q}(\rho,x)$ are:
\begin{enumerate}[(a)]
\item $\tilde{Q}(\rho,x) \cap \tilde{Q}(\gamma,x) = \emptyset$ for every $\gamma\neq \rho\in\mathcal{V}$ and every $x\in K$.
\item $\displaystyle K = \bigcup_{\rho\in\mathcal{V}}\tilde{Q}(\rho,x)$ for every $x\in K$.
\end{enumerate}
Hence, $\left\{\tilde{Q}(\rho,x)\right\}_{\rho\in\mathcal{V}}$ forms a partition of $K$. We are now in a position to define the so-called quadrant limits, the concept of continuity from above and the Skorohod space.

\begin{defn}[Quadrant Limits and Continuity from Above]\label{d1}
$\\$
Consider a function $f:\mathbb{R}^d\rightarrow \mathbb{R}$, $\rho\in\mathcal{V}$ and $x\in K$. We say that a number $l$ is the $\rho$-limit of $f$ at $x$ if for every sequence $\{x_n\}_{n=1}^\infty\subset Q(\rho,x)$ satisfying $x_n\rightarrow x$ we have $f(x_n)\rightarrow l$. In this case we write $l = f(x+ 0_\rho)$. When $\rho = b$ we may write $f(x+0_+):=f(x+0_b)$. With this notation, $f$ is said to be continuous from above at $x$ if $f(x+0_+) = f(x)$.
\end{defn}

\begin{defn}[The Skorohod Space]\label{d2}
$\\$
We define the Skorohod space $\mathcal{D}_K$ as the collection of all functions $f:K\rightarrow\mathbb{R}$ which have all $\rho$-limits and are continuous from above at every $x\in K$.
\end{defn}

\noindent {\bf Remark:} It is easily seen that if $f\in\mathcal{D}_K$, $\rho\in\mathcal{V}$, $x\in K$ and $\{x_n\}_{n=1}^\infty\subset \tilde{Q}(\rho,x)$ is a sequence with $x_n\rightarrow x$, then $f(x_{n})\rightarrow f(x+0_\rho)$. This follows from the continuity from above as $Q(\rho,x)\cap Q(b,\xi)\neq \emptyset$ for every $\xi\in\tilde{Q}(\rho,x)$.


Before stating some of the most important properties of $\mathcal{D}_K$ we will introduce some further notation. Consider the partitions $\mathcal{T}_j = \{ a^j = t_{j,0} < t_{j,1} < \ldots < t_{j,{r_j}} = b^j\}$ for $j=1,\ldots,d$. We define the rectangular partition $\mathcal{R}(\mathcal{T}_1,\ldots,\mathcal{T}_d)$ determined by $\mathcal{T}_1,\ldots,\mathcal{T}_d$ as the collection of all rectangles of the form
\[ R = \prod_{k=1}^d \left[t_{k,j_k-1},t_{k,j_k}\rangle\right., j_k\in\{1,\ldots,r_k\},\ k=1,\ldots,d, \]
where $\rangle$ stands for ``)'' or ``]'' if $t_{k,j_k}<b^{k}$ or $t_{k,j_k}=b^{k}$, respectively. With the aid of this notation, we can now state two important lemmas.

\begin{lemma}\label{l1}
$\\$
Let $f\in\mathcal{D}_K$. Then, for every $\epsilon > 0$ there is $\delta>0$ and partitions $\mathcal{T}_j$ of $[a^j,b^j]$, $j=1,\ldots,d$, such that for any $R\in\mathcal{R}(\mathcal{T}_1,\ldots,\mathcal{T}_d)$ and any $\theta,\vartheta\in R$ with $|\theta-\vartheta|<\delta$ the inequality $|f(\theta)-f(\vartheta)|<\epsilon$ holds. Furthermore, we can take the partitions in such a way that $\displaystyle \sup_{\theta,\vartheta\in R}\{|\theta - \vartheta|\} < \delta$ for every $R\in\mathcal{R}(\mathcal{T}_1,\ldots,\mathcal{T}_d)$.
\end{lemma}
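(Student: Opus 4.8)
The plan is to reduce the $d$-dimensional statement to a one-dimensional compactness argument applied coordinate by coordinate, exploiting the quadrant-limit structure of $\mathcal{D}_K$. Fix $f \in \mathcal{D}_K$ and $\epsilon > 0$. The key observation is that, by Definition \ref{d1} and the Remark following Definition \ref{d2}, at every point $x \in K$ and for every $\rho \in \mathcal{V}$ the $\rho$-limit $f(x + 0_\rho)$ exists; since $\{\tilde{Q}(\rho,x)\}_{\rho \in \mathcal{V}}$ partitions $K$, this means that near $x$, inside each of the $2^d$ ``generalized quadrants'' $\tilde{Q}(\rho, x)$, the function $f$ is within $\epsilon/2$ of the constant $f(x+0_\rho)$ on a small ball. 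Concretely, for each $x$ there is $r_x > 0$ such that $|f(\theta) - f(\vartheta)| < \epsilon$ whenever $\theta, \vartheta$ lie in the same set $\tilde{Q}(\rho,x) \cap B(x, r_x)$. The issue is that the balls $B(x, r_x)$ are not aligned to a common grid, so a naive compactness extraction does not produce a \emph{rectangular} partition with the oscillation-control property; this grid-alignment is the main obstacle, and it is what forces the argument to be run one coordinate at a time.

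First I would handle the case $d = 1$ in detail. Here $\mathcal{D}_K$ reduces to the classical one-dimensional Skorohod space on $[a^1, b^1]$: $f$ is right-continuous with left limits. The standard argument shows that for every $\epsilon > 0$ the set of ``$\epsilon$-jump points'' is finite, and a routine compactness/bisection argument produces a partition $a^1 = t_0 < t_1 < \cdots < t_r = b^1$ such that $\sup_{s,t \in [t_{i-1}, t_i)} |f(s) - f(t)| < \epsilon$ for each $i$ (with the last interval closed at $b^1$). One first picks, around each point, a half-open interval on which the oscillation is small — using right-continuity to the right of the point and the existence of the left limit to the left — then extracts a finite subcover and refines it into a partition. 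I would then choose this partition fine enough in mesh that additionally $\sup\{|s - t| : s, t \in [t_{i-1}, t_i)\} < \delta$ for a suitable $\delta$; since the oscillation bound already holds on each whole piece, shrinking the pieces only helps, and we may simply take $\delta$ to be the mesh of the resulting partition.

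For general $d$, I would iterate the one-dimensional construction over the coordinates. The cleanest route is to show that $\mathcal{D}_K$ functions have a coordinatewise Skorohod-type regularity that lets one apply the $d=1$ result in each direction: fixing all but the $k$-th coordinate, the section $t \mapsto f(x^1, \ldots, x^{k-1}, t, x^{k+1}, \ldots, x^d)$ need not itself lie in $\mathcal{D}_{[a^k,b^k]}$ uniformly, so instead I would argue directly on $K$. Using Definition \ref{d1} at each $x \in K$, pick a radius $r_x$ and, crucially, pick it so that the ball $B(x, r_x)$ is respected by a product of half-open coordinate intervals $\prod_k [x^k - r_x, x^k)$-type neighborhoods aligned with the sign pattern dictated by which $\tilde Q(\rho,x)$ one is in; then cover $K$ by finitely many such neighborhoods, collect all the coordinate endpoints that appear, and let $\mathcal{T}_k$ be the partition of $[a^k, b^k]$ generated by the $k$-th coordinates of all these endpoints. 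The resulting rectangular partition $\mathcal{R}(\mathcal{T}_1, \ldots, \mathcal{T}_d)$ has the property that each cell $R$ is contained in one of the finitely many chosen neighborhoods, hence in a single set $\tilde Q(\rho,x) \cap B(x, r_x)$ on which the oscillation of $f$ is below $\epsilon$; choosing the cover fine enough also makes every cell have diameter below any prescribed $\delta$, and taking $\delta$ to be the maximal cell diameter gives the ``furthermore'' clause. The delicate point throughout is bookkeeping the $2^d$ quadrant directions so that the half-open cells of $\mathcal{R}$ are consistently on the ``continuity-from-above'' side — continuity from above at the lower-left corner of each cell is exactly what the half-open convention $[t_{k,j_k-1}, t_{k,j_k})$ is designed to match — and verifying that this can be done with finitely many neighborhoods simultaneously in all directions; this is the real content, and I expect it to be the main obstacle, with the rest being the routine compactness and refinement steps.
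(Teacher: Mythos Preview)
The paper does not give its own proof of this lemma: immediately after stating Lemmas \ref{l1} and \ref{l2} it writes ``Lemmas \ref{l1} and \ref{l2} are, respectively, Lemma 1.5 and Corollary 1.6 in \cite{neu}. Their proofs can be found there.'' So there is nothing in the paper to compare your argument against beyond that citation.

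Your plan is essentially the standard compactness proof for the multidimensional Skorohod space, and the strategy is sound. One step deserves to be made more explicit. After extracting a finite subcover $\{B(x_i,r_{x_i})\}$ and forming the partitions $\mathcal{T}_k$ from the coordinates $x_i^k$, you assert that each cell $R\in\mathcal{R}(\mathcal{T}_1,\ldots,\mathcal{T}_d)$ lies in a single neighborhood $B(x_i,r_{x_i})$. That does not follow from collecting the centers' coordinates alone; you need to further refine the $\mathcal{T}_k$ (or invoke a Lebesgue number for the cover) so that every cell has diameter smaller than the Lebesgue number, which then forces $R\subset B(x_i,r_{x_i})$ for some $i$. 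Once that is in place, the fact that each $x_i^k$ is a partition point and that cells have the half-open form $\prod_k[t_{k,j_k-1},t_{k,j_k}\rangle$ guarantees that $R$ sits inside a single $\tilde Q(\rho,x_i)$, and your oscillation bound follows. You allude to this refinement (``choosing the cover fine enough also makes every cell have diameter below any prescribed $\delta$''), but the phrasing conflates refining the cover with refining the partition; it is the latter that is needed. With that clarification the argument goes through, and the ``furthermore'' clause is automatic since you may take $\delta$ to be the maximum cell diameter after refinement.
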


\begin{lemma}\label{l2}
$\\$
Every function in $\mathcal{D}_K$ is bounded on $K$.
\end{lemma}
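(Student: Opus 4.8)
The plan is to deduce Lemma \ref{l2} directly from Lemma \ref{l1}, using a compactness argument. First I would fix $\epsilon = 1$ and apply Lemma \ref{l1} to obtain a $\delta > 0$ and partitions $\mathcal{T}_1,\ldots,\mathcal{T}_d$ such that the rectangular partition $\mathcal{R}(\mathcal{T}_1,\ldots,\mathcal{T}_d)$ has the property that $|f(\theta) - f(\vartheta)| < 1$ whenever $\theta,\vartheta$ lie in a common $R \in \mathcal{R}(\mathcal{T}_1,\ldots,\mathcal{T}_d)$ with $|\theta - \vartheta| < \delta$. Crucially, I would invoke the "Furthermore" clause of Lemma \ref{l1}: the partitions can be chosen fine enough that $\sup_{\theta,\vartheta \in R}\{|\theta - \vartheta|\} < \delta$ for every $R$. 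This means the diameter condition $|\theta-\vartheta|<\delta$ is automatically satisfied for any two points in the same $R$, so $f$ oscillates by less than $1$ on each $R$.

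Next I would observe that $\mathcal{R}(\mathcal{T}_1,\ldots,\mathcal{T}_d)$ is a \emph{finite} collection of rectangles (there are $\prod_{k=1}^d r_k$ of them) whose union is all of $K$, by construction of the rectangular partition. For each such $R$, pick any reference point $\theta_R \in R$; then for every $\theta \in R$ we have $|f(\theta)| \leq |f(\theta_R)| + 1$. Taking the maximum over the finitely many rectangles, set $M := \max_{R \in \mathcal{R}(\mathcal{T}_1,\ldots,\mathcal{T}_d)} |f(\theta_R)| + 1$, which is a finite real number. Since every $\theta \in K$ belongs to at least one $R$, we conclude $|f(\theta)| \leq M$ for all $\theta \in K$, i.e., $f$ is bounded on $K$.

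I do not anticipate a genuine obstacle here, since Lemma \ref{l1} does all the heavy lifting; the only point requiring a little care is making sure one uses the second (``furthermore'') part of Lemma \ref{l1} rather than just the first, because otherwise the condition $|\theta-\vartheta|<\delta$ would not be guaranteed for arbitrary pairs of points in a given rectangle, and one would be left needing a further subdivision argument. With the diameter bound in hand, the finiteness of the partition immediately yields the uniform bound. An alternative route would be to argue by contradiction using the existence of quadrant limits directly: if $f$ were unbounded, extract a sequence $\{x_n\} \subset K$ with $|f(x_n)| \to \infty$, pass to a convergent subsequence $x_n \to x \in K$ by compactness of $K$, then pass to a further subsequence lying in a single $\tilde{Q}(\rho,x)$ (possible since $\mathcal{V}$ is finite and the $\tilde{Q}(\rho,x)$ partition $K$), and conclude $f(x_n) \to f(x+0_\rho) \in \mathbb{R}$, contradicting $|f(x_n)| \to \infty$. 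I would present the first argument as the main proof since it is shorter and reuses Lemma \ref{l1}, but either works.
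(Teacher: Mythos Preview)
Your argument is correct. The paper itself does not give a proof of this lemma; it simply refers the reader to \cite{neu}, where the statement appears as Corollary~1.6, an immediate consequence of Lemma~1.5 (which is the paper's Lemma~\ref{l1}). Your main proof---applying Lemma~\ref{l1} with $\epsilon=1$ and the ``furthermore'' refinement, then bounding $f$ on each of the finitely many rectangles---is exactly the deduction that the word ``Corollary'' in \cite{neu} suggests, so you have reproduced the intended argument. Your alternative compactness/quadrant-limit argument is also valid and self-contained, but the first route is preferable here since it reuses Lemma~\ref{l1} as the paper does.
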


Lemmas \ref{l1} and \ref{l2} are, respectively, Lemma 1.5 and Corollary 1.6 in \cite{neu}. Their proofs can be found there.

Let $K_1 = [a^1,b^1]$ and $K_2 = [a^2,b^2]\times \cdots \times [a^d,b^d]$, so $K = K_1 \times K_2$. We will be dealing with functions which are c\'adl\'ag on the first coordinate and continuous on the remaining $d-1$. For this purpose we will turn our attention to the space $\widetilde{\mathcal{D}}_K\subset\mathcal{D}_K$ of all functions $f\in\mathcal{D}_K$ such that $f(t,\cdot):K_2\rightarrow\mathbb{R}$ is continuous $\forall$ $t\in K_1$ and $f(\cdot,\xi):K_1\rightarrow\mathbb{R}$ is c\'adl\'ag $\forall$ $\xi\in K_2$.

\noindent {\bf Remark:} It is worth noting that \textit{all elements in} $\mathcal{D}_K$ {\it are componentwise c\'adl\'ag}, so it is really the continuity in the last $d-1$ coordinates what makes $\widetilde{\mathcal{D}}_K$ a proper subspace of $\mathcal{D}_K$.

\begin{lemma}\label{l5}
$\\$
Let $f\in\widetilde{\mathcal{D}}_K$ and $\epsilon > 0$. Then, there is $\delta > 0$ such that
\[ \sup_{\begin{subarray}{c} |\xi - \eta| < \delta \\ \xi,\eta\in K_2\end{subarray}}\{ |f(t,\xi) - f(t,\eta)|\} \leq \epsilon\ \ \ \forall\ t\in K_1.\]
\end{lemma}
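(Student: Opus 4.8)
The statement asserts a form of uniform equicontinuity in the last $d-1$ coordinates that is itself uniform over the first coordinate $t \in K_1$. The natural plan is to deduce this from the stronger structural result Lemma \ref{l1}, which gives a partition of $K$ into rectangles on which $f$ oscillates by less than any prescribed $\epsilon$ for points that are close. First I would fix $\epsilon > 0$ and apply Lemma \ref{l1} to obtain $\delta_0 > 0$ and partitions $\mathcal{T}_1, \ldots, \mathcal{T}_d$ of the respective intervals such that $f$ varies by less than $\epsilon/2$ (say) between any two points of the same $R \in \mathcal{R}(\mathcal{T}_1,\ldots,\mathcal{T}_d)$ lying within $\delta_0$ of each other; I would also record the mesh size of the partition $\mathcal{T}_1$ of $K_1$, calling its smallest gap $\delta_1 > 0$.

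The key geometric observation is that for a \emph{fixed} $t \in K_1$, the slices $\{R \cap (\{t\}\times K_2) : R \in \mathcal{R}\}$ partition $\{t\}\times K_2$ into finitely many sub-rectangles of $K_2$, because $t$ lies in exactly one interval $[t_{1,j-1}, t_{1,j}\rangle$ of $\mathcal{T}_1$; moreover, as $t$ ranges over that same interval of $\mathcal{T}_1$, this induced partition of $K_2$ does not change. Hence there are only finitely many distinct induced partitions of $K_2$ (one per interval of $\mathcal{T}_1$), and I can take $\delta \le \delta_0$ small enough that whenever $\xi,\eta \in K_2$ satisfy $|\xi - \eta| < \delta$, the point $\eta$ lies either in the same cell of the induced $K_2$-partition as $\xi$ or in an adjacent one. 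The honest obstacle is the "adjacent cell" case: if $\xi$ and $\eta$ fall on opposite sides of a partition hyperplane, $f(t,\cdot)$ need not be close across that hyperplane a priori. This is where I would use the continuity of $f(t,\cdot)$ on $K_2$ (which is exactly the defining property of $\widetilde{\mathcal{D}}_K$), but to keep the bound uniform in $t$ I cannot invoke it pointwise; instead I would argue via a chaining/interpolation trick, inserting an intermediate point on the relevant hyperplane and using that hyperplanes of $\mathcal{T}_2,\ldots,\mathcal{T}_d$ are themselves "faces" approached from above, combined with continuity from above of $f$ at those faces. Alternatively — and this is probably cleaner — I would refine the argument by running Lemma \ref{l1} together with a compactness argument directly: suppose the conclusion fails, so there are $\epsilon_0 > 0$, points $t_n \in K_1$ and $\xi_n, \eta_n \in K_2$ with $|\xi_n - \eta_n| \to 0$ but $|f(t_n,\xi_n) - f(t_n,\eta_n)| > \epsilon_0$; pass to subsequences so that $t_n \to t_*$, $\xi_n, \eta_n \to \xi_*$, and $t_n$ approaches $t_*$ from a fixed quadrant (i.e., $t_n \ge t_*$ or $t_n < t_*$ eventually) and likewise the $\xi_n,\eta_n$ approach $\xi_*$ from fixed quadrants in $K_2$.

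In that contradiction setup, the pairs $(t_n,\xi_n)$ and $(t_n,\eta_n)$ both converge to $(t_*,\xi_*)$ while staying in a fixed quadrant $Q(\rho, (t_*,\xi_*))$ of $K$ (after passing to the subsequence, the first-coordinate side is fixed, and the $K_2$-sides can be taken fixed and — crucially, using continuity of $f(t,\cdot)$ on $K_2$ — the $K_2$-limit does not depend on which $K_2$-quadrant is used, so the $\rho$-limit of $f$ at $(t_*,\xi_*)$ along both sequences is the same number $f((t_*,\xi_*) + 0_\rho)$). Therefore $f(t_n,\xi_n)$ and $f(t_n,\eta_n)$ both converge to this common value, forcing $|f(t_n,\xi_n) - f(t_n,\eta_n)| \to 0$ and contradicting $> \epsilon_0$. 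The main obstacle, as flagged above, is justifying that the $\rho$-limit is insensitive to the $K_2$-component of the quadrant direction; I expect to handle this by noting that for the sequence $\xi_n \to \xi_*$ one may, by continuity of each $f(t,\cdot)$, replace $\xi_n$ by a nearby point in the "$b$-side" $K_2$-quadrant without changing the limiting behavior, thereby reducing every case to the single quadrant whose $K_2$-part is $(t_*,\ldots) $ approached from above, where the value is pinned down by continuity-from-above of $f$ and the existence of quadrant limits guaranteed by $f \in \mathcal{D}_K$. Writing this reduction carefully, keeping track of which coordinate directions are "from above" versus "from below," is the technical heart of the proof.
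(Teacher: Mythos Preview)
Your contradiction argument correctly isolates the crux: you must show that the quadrant limit $f((t_*,\xi_*)+0_\rho)$ is insensitive to the $K_2$-components of $\rho$. But the mechanism you propose for this --- replacing $\xi_n$ by a nearby $\xi_n'$ on the $b$-side of $\xi_*$ via continuity of $f(t_n,\cdot)$ --- is circular. Continuity of $f(t_n,\cdot)$ gives, for each fixed $n$, some radius $r_n>0$ on which $f(t_n,\cdot)$ varies by less than (say) $1/n$; however you have no control on $r_n$ as $n$ varies, and it may well shrink faster than the distance from $\xi_n$ to the $b$-side of $\xi_*$ in $K_2$. In that case no admissible $\xi_n'$ lies inside the ball of radius $r_n$ about $\xi_n$. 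Put differently, forcing $|f(t_n,\xi_n)-f(t_n,\xi_n')|\to 0$ while constraining $\xi_n'$ to lie on a prescribed side of $\xi_*$ requires exactly a modulus of continuity for $f(t_n,\cdot)$ that is uniform in $n$ --- which is the statement you are trying to prove. The same circularity blocks the variant ``compare $f(t_n,\xi_n)$ with $f(t_n,\xi_*)$''.

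The paper's proof avoids this by a direct $\epsilon/3$ argument that uses Lemma~\ref{l1} in the $K_1$-direction rather than in $K_2$. Apply Lemma~\ref{l1} with tolerance $\epsilon/3$ to obtain partitions $\mathcal{T}_1,\ldots,\mathcal{T}_d$ whose rectangles have diameter $<\delta_0$. The point is that $\mathcal{T}_1$ is \emph{finite}: for each $s\in\mathcal{T}_1$ the single function $f(s,\cdot)$ is uniformly continuous on the compact set $K_2$, giving some $\delta_s$; set $\delta=\min_{s\in\mathcal{T}_1}\delta_s$. For arbitrary $t\in K_1$ take the largest $s\in\mathcal{T}_1$ with $s\le t$; then $(t,\xi)$ and $(s,\xi)$ lie in the same rectangle of $\mathcal{R}(\mathcal{T}_1,\ldots,\mathcal{T}_d)$ (they share the $K_2$-coordinate and the $K_1$-interval $[s,\text{next})$), so $|f(t,\xi)-f(s,\xi)|<\epsilon/3$, and likewise for $\eta$; the middle term $|f(s,\xi)-f(s,\eta)|<\epsilon/3$ follows from $\delta\le\delta_s$. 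The idea you are missing is precisely this: route through a \emph{finite} family of $t$-slices (the partition points of $\mathcal{T}_1$), where uniform continuity on $K_2$ is available for free, instead of trying to control $f(t_n,\cdot)$ along a varying sequence $t_n$.
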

\begin{proof}
From Lemma \ref{l1} we can find $\delta_0>0$ and partitions $\mathcal{T}_j$ of $[a^j,b^j]$, $j=1,\ldots,d$ such that the conclusions of the lemma hold true with $\epsilon$ replaced by $\frac{\epsilon}{3}$. We take the partitions in such a way that whenever $\theta$ and $\vartheta$ belong to the same rectangle, the distance between them is less than $\delta_0$. Let $s\in\mathcal{T}_1$. Since $K_2$ is compact and $f(s,\cdot)$ is continuous, we can find $\delta_s$ such that for any $\xi,\eta\in K_2$ with $|\xi-\eta|<\delta_s$ we get $|f(s,\xi) - f(s,\eta)|<\frac{\epsilon}{3}$. Let $\displaystyle \delta = \min_{s\in\mathcal{T}_1}\{\delta_s\}$ and pick $t\in K_1$ and $\xi,\eta\in K_2$ with $|\xi - \eta| < \delta$. Take the largest $s\in\mathcal{T}_1$ with $s\leq t$. Then, $|s-t|<\delta_0$ and hence
\[ |f(t,\eta) - f(t,\xi)| \leq |f(t,\xi) - f(s,\xi)| + |f(s,\eta) - f(s,\xi)| + |f(t,\eta) - f(s,\eta)| < \epsilon. \]
The proof is then finished by taking the supremum over $\xi$ and $\eta$ and noticing that the choice of $\delta$ was independent of $t$.
\end{proof}

\subsection{The Skorohod topology}\label{s2s2}
So far we have not yet defined a topology on $\mathcal{D}_K$, so we turn our attention to this issue now. We will start by defining the Skorohod metric as given in \cite{neu}. Then, we will define a second metric on $\widetilde{D}_K$ and show that it is equivalent to the corresponding restriction of the Skorohod metric. This second metric will be more natural for the structure of $\widetilde{D}_K$ and will prove useful in the proof of the continuous mapping theorem for the smallest argmax functional. In order to define both of these metrics and state some of their properties, we will need some additional notation.

Consider a closed interval $I\subset\mathbb{R}$ and the class $\Lambda_I$ of all functions $\lambda:I\rightarrow I$ which are surjective (onto) and strictly monotone increasing. Define the function $\interleave\cdot\interleave_I:\Lambda_I\rightarrow\mathbb{R}$ by the formula $\displaystyle \interleave\lambda\interleave_I = \sup_{s\neq t}\left\{\left| \function{\log}{\frac{\lambda(t)-\lambda(s)}{t-s}}\right|\right\}$. We write $\Lambda_K := \Lambda_{[a^1,b^1]}\times\cdots\times\Lambda_{[a^d,b^d]}$ and for $\lambda:=(\lambda_1,\ldots,\lambda_d)\in\Lambda_K$, $\displaystyle \interleave \lambda \interleave_K := \max_{1\leq k \leq d} \{\interleave\lambda_k\interleave_{[a^k,b^k]}\}$. In a similar fashion, we define $\Lambda_{K_2} := \Lambda_{[a^2,b^2]}\times\cdots\times\Lambda_{[a^d,b^d]}$ and for $\lambda\in\Lambda_{K_2}$, $\displaystyle \interleave \lambda \interleave_{K_2} := \max_{2\leq k \leq d} \{\interleave\lambda_k\interleave_{[a^k,b^k]}\}$. Note that for $(\lambda_1,\lambda)\in\Lambda_K = \Lambda_{K_1}\times\Lambda_{K_2}$ we have $\interleave (\lambda_1,\lambda) \interleave_{K} = \interleave \lambda_1 \interleave_{K_1}\lor\interleave \lambda \interleave_{K_2}$.   We will use the sup-norm notation also: for a function $f:A\rightarrow\mathbb{R}$ we write $\displaystyle \|f\|_A=\sup_{x\in A}\{|f(x)|\}$.

\begin{defn}[The Skorohod metric]
$\\$
We define the Skorohod metric $d_K:\mathcal{D}_K\times\mathcal{D}_K\rightarrow\mathbb{R}$ as follows:
{ \[ d_K(f,g) = \inf_{\lambda\in\Lambda_K}\left\{\interleave\lambda\interleave_K + \|f-g\circ\lambda\|_K\right\}. \]}
\end{defn}

With this definition we can now state the following fundamental result about the Skorohod space.

\begin{lemma}\label{l6}
$\\$
The Skorohod metric is a metric. If $\mathcal{D}_K$ is endowed with the topology defined by $d_K$, then it becomes a Polish space.
\end{lemma}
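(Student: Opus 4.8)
The plan is to prove Lemma~\ref{l6} by following the classical template for Skorohod-type spaces (as in Billingsley for $d=1$), adapted to the multivariate setup of \cite{neu}, in three stages: verifying the metric axioms, constructing a dense countable subset, and proving completeness under an equivalent complete metric.

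First I would check that $d_K$ is genuinely a metric. Symmetry is the only nontrivial axiom and follows by the usual substitution argument: given a near-optimal $\lambda$ for the pair $(f,g)$, the inverse $\lambda^{-1}$ lies in $\Lambda_K$ with $\interleave\lambda^{-1}\interleave_K = \interleave\lambda\interleave_K$ (since $\log$ of the reciprocal difference quotient just flips sign), and $\|g - f\circ\lambda^{-1}\|_K = \|f - g\circ\lambda\|_K$ after the change of variables $x\mapsto\lambda(x)$. The triangle inequality uses that $\Lambda_K$ is closed under composition with $\interleave\lambda\circ\mu\interleave_K \le \interleave\lambda\interleave_K + \interleave\mu\interleave_K$ (subadditivity of the log of a product), together with $\|f - h\circ(\lambda\circ\mu)\|_K \le \|f - g\circ\mu\|_K + \|g\circ\mu - h\circ\lambda\circ\mu\|_K = \|f-g\circ\mu\|_K + \|g - h\circ\lambda\|_K$. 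Positivity ($d_K(f,g)=0\implies f=g$) is where continuity from above (Definition~\ref{d1}, \ref{d2}) enters: if $d_K(f,g)=0$ there are $\lambda_n\to\mathrm{id}$ uniformly (forced by $\interleave\lambda_n\interleave_K\to 0$) with $\|f - g\circ\lambda_n\|_K\to 0$, so $f(x) = \lim_n g(\lambda_n(x))$; choosing the approximations so that $\lambda_n(x)\in \tilde Q(b,x)$ (i.e. approaching $x$ from above componentwise) and using the Remark after Definition~\ref{d2} gives $f(x) = g(x+0_+) = g(x)$, and hence $f=g$.

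Next, separability: I would exhibit a countable dense set. Using Lemma~\ref{l1}, every $f\in\mathcal{D}_K$ is within $\epsilon$ (in sup-norm, hence in $d_K$) of a function that is constant on each rectangle $R$ of some rectangular partition $\mathcal{R}(\mathcal{T}_1,\ldots,\mathcal{T}_d)$ with the ``left-closed/right-open, except at the upper boundary'' convention — this is exactly the convention built into $\mathcal{R}$, so such step functions lie in $\mathcal{D}_K$. Approximating the partition points by rationals and the constant values by rationals (and controlling the error of perturbing the partition points by a small time-change $\lambda$ whose $\interleave\cdot\interleave_K$ is small, which only moves the step function by a small amount in $d_K$), one gets a countable dense family, so $(\mathcal{D}_K,d_K)$ is separable.

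Finally, completeness, which I expect to be the main obstacle. The metric $d_K$ itself need not be complete (the same phenomenon as in $D[0,1]$), so the standard fix is to introduce the equivalent metric $d_K^\circ(f,g) = \inf_{\lambda\in\Lambda_K}\{\interleave\lambda\interleave_K^\circ \vee \|f-g\circ\lambda\|_K\}$, where $\interleave\lambda\interleave_K^\circ = \max_k \sup_{s<t}|\log\frac{\lambda_k(t)-\lambda_k(s)}{t-s}|$ is replaced by the ``slope'' metric $\sup_{s\ne t}\bigl|\frac{\lambda_k(t)-\lambda_k(s)}{t-s} - 1\bigr|$-type quantity that makes Cauchy sequences of time-changes converge; one shows $d_K^\circ$ and $d_K$ induce the same topology (both are squeezed between comparable expressions near $\mathrm{id}$), and then that $d_K^\circ$ is complete. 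For the completeness argument: given a $d_K^\circ$-Cauchy sequence $\{f_n\}$, pass to a fast subsequence, build time-changes $\mu_n\in\Lambda_K$ with $\sum_n\interleave\mu_n\interleave_K^\circ<\infty$ and $\|f_n - f_{n+1}\circ\mu_n\|_K$ summable, show the compositions $\lambda_n := \mu_1\circ\cdots\circ\mu_n$ converge uniformly to some $\lambda\in\Lambda_K$ (the bi-Lipschitz control from $\interleave\cdot\interleave_K^\circ$ keeps the limit strictly increasing and surjective), and conclude that $f_n\circ\lambda_n$ converges uniformly to some $g$; then $g\in\mathcal{D}_K$ because $\mathcal{D}_K$ is closed under uniform limits (quadrant limits and continuity from above survive uniform convergence — a short $\epsilon/3$ argument), and $f_n\to g$ in $d_K$. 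I would note that the whole construction mirrors \cite{neu} and Billingsley's treatment of $D[0,1]$, so most estimates are routine; the one place requiring care is checking that the composed time-changes stay in $\Lambda_K$ with a uniformly controlled modulus, which is what the switch to $d_K^\circ$ buys us.
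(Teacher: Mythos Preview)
The paper does not actually prove Lemma~\ref{l6}; it simply states ``For a proof of the last result, we refer the reader to Section 2 in \cite{neu}.'' Your proposal, by contrast, sketches a self-contained proof following the Billingsley template adapted to the multiparameter setting, which is exactly what Neuhaus does. So in that sense your route and the paper's (outsourced) route coincide, and your outline for the metric axioms and for separability via Lemma~\ref{l1} is sound. One minor imprecision: in the positivity argument you cannot literally ``choose the approximations so that $\lambda_n(x)\in\tilde Q(b,x)$''---the $\lambda_n$ are handed to you. The correct version evaluates at a point $y$ strictly above $x$ with $y^k-x^k$ larger than the uniform displacement of $\lambda_n$, so that $\lambda_n(y)\in Q(b,x)$, and then lets $y\downarrow x$ using continuity from above of both $f$ and $g$.

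There is, however, one genuine confusion in your completeness discussion. You write that ``$d_K$ itself need not be complete (the same phenomenon as in $D[0,1]$)'' and propose passing to an equivalent complete metric $d_K^\circ$. This has things backwards. In the one-dimensional theory the \emph{incomplete} Skorohod metric is the one built from $\sup_t|\lambda(t)-t|$; the \emph{complete} one is precisely the metric built from the log-slope norm $\sup_{s\ne t}\bigl|\log\frac{\lambda(t)-\lambda(s)}{t-s}\bigr|$. But the paper's $d_K$ already uses $\interleave\lambda\interleave_K=\max_k\sup_{s\ne t}\bigl|\log\frac{\lambda_k(t)-\lambda_k(s)}{t-s}\bigr|$, i.e.\ it \emph{is} the $d^\circ$-type metric, and it is complete as stated (this is exactly what Neuhaus proves). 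Your proposed $d_K^\circ$ is either the same object or a locally equivalent variant, so the detour is unnecessary. The completeness argument you sketch---pass to a rapidly convergent subsequence, compose the time-changes, use the bi-Lipschitz control afforded by $\interleave\cdot\interleave_K$ to get a limiting homeomorphism in $\Lambda_K$, and conclude uniform convergence of $f_n\circ\lambda_n$ to a limit in $\mathcal{D}_K$---is correct, but it applies directly to $d_K$ without any preliminary change of metric.
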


For a proof of the last result, we refer the reader to Section 2 in \cite{neu}. We now proceed to define another metric, $\widetilde{d}_K$, on $\mathcal{D}_K$ by the formula:
\begin{equation}\nonumber
\widetilde{d}_K(f,g) = \inf_{\lambda\in\Lambda_{[a^1,b^1]}}\left\{\interleave\lambda\interleave_{[a^1,b^1]} + \sup_{(t,\xi)\in K_1\times K_2} \{ |f(t,\xi)-g(\lambda(t),\xi)|\}\right\}.
\end{equation}
To properly describe the properties of $\widetilde{d}_K$ we need the ball notation for metric spaces: given a metric space $(\texttt{X},d)$, $r>0$ and $x\in\texttt{X}$ we write $B_{r}^d(x)$ for the open ball of radius $r$ and center at $x$ with respect to the metric $d$. Additionally, the following lemma will prove to be useful.

\begin{lemma}\label{l8}
$\\$
Let $I\subset\mathbb{R}$ be any compact interval. Then, for $\epsilon>0$ there is $\delta>0$ such that for any $\lambda\in\Lambda_I$ with $\interleave \lambda \interleave_I<\delta$ we also have $$\displaystyle \sup_{s\in I}\{|\lambda(s) - s|\} <\epsilon.$$
\end{lemma}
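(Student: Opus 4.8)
The plan is to exploit the fact that every $\lambda\in\Lambda_I$ fixes the endpoints of $I$, and then to bound $\lambda(s)-s$ by comparing the increments of $\lambda$ over $[c,s]$ and over $[s,d]$ (writing $I=[c,d]$) against the lengths $s-c$ and $d-s$, directly from the definition of $\interleave\cdot\interleave_I$.

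First I would note that, being surjective and strictly increasing, any $\lambda\in\Lambda_I$ is a strictly increasing bijection of $I$ onto itself, so necessarily $\lambda(c)=c$ and $\lambda(d)=d$; put $L:=d-c$. Next, suppose $\interleave\lambda\interleave_I<\delta$. Since $(\lambda(t)-\lambda(s))/(t-s)>0$ for all $s\neq t$, unwinding the logarithm in the definition of $\interleave\cdot\interleave_I$ shows that $e^{-\delta}<\frac{\lambda(t)-\lambda(s)}{t-s}<e^{\delta}$ for all $s\neq t$ in $I$. Taking $s=c$ and using $\lambda(c)=c$ gives $\lambda(t)-t\leq(e^{\delta}-1)(t-c)\leq(e^{\delta}-1)L$; taking $t=d$ and using $\lambda(d)=d$ gives $\lambda(s)-s\geq-(e^{\delta}-1)(d-s)\geq-(e^{\delta}-1)L$. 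Combining these, $\sup_{s\in I}|\lambda(s)-s|\leq(e^{\delta}-1)L$.

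Finally, given $\epsilon>0$ I would choose any $\delta$ with $0<\delta<\log(1+\epsilon/L)$, so that $(e^{\delta}-1)L<\epsilon$ and the asserted bound follows (the case in which $I$ is a single point is trivial). I do not anticipate any substantive difficulty in carrying this out: the only step requiring a moment's care is the elementary observation that elements of $\Lambda_I$ must pin down the two endpoints of $I$, which is precisely what makes the two-sided comparison work and yields a bound that is uniform over all $\lambda\in\Lambda_I$ with sufficiently small $\interleave\cdot\interleave_I$.
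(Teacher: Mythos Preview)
Your proof is correct and follows essentially the same approach as the paper: both arguments use that any $\lambda\in\Lambda_I$ fixes the endpoints of $I$ and then bound $|\lambda(s)-s|$ by controlling the difference quotient via the definition of $\interleave\cdot\interleave_I$. The only cosmetic differences are that the paper anchors solely at the left endpoint and linearizes the bound (obtaining $|\lambda(s)-s|<2\delta\,|I|$ via the elementary inequalities $\tau\leq\log(1+2\tau)$ for small $\tau$ and $\log(1+\tau)\leq\tau$), whereas you anchor at both endpoints and keep the bound in the form $(e^{\delta}-1)L$; neither change affects the substance of the argument.
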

\begin{proof}
Assume that $I=[u,v]$. It suffices to choose $\delta < \frac{1}{4}\land\frac{\epsilon}{2|v-u|}$. To see this, observe that for any $\tau\in(0,\frac{1}{4})$, $\tau < 2\tau - 4\tau^2\leq \log(1+2\tau)$  and for any $\tau>-1$, $\log(1+\tau) \leq \tau$. It follows that for $\lambda\in\Lambda_I$ with $\interleave \lambda \interleave_I<\delta$ and any $s\in I$, $\log(1-2\delta)< -\delta \leq \log\frac{\lambda(s) -u}{s - u} \leq \delta < 2\delta - 4\delta^2\leq \log(1+2\delta)$ and thus, $|\lambda(s) -s|<2(s-u)\delta\leq 2|u-v|\delta$. In the previous inequalities we have made implicit use of the fact that $\lambda(u)=u$.
\end{proof}

The next lemma contains some of the most relevant properties of $\widetilde{d}_K$.

\begin{lemma}\label{l7}
$\\$
The following statements are true:
\begin{enumerate}[(i)]
\item $\widetilde{d}_K$ is a metric on $\mathcal{D}_K$.
\item $d_K (f,g) \leq\widetilde{d}_K(f,g) \leq \|f-g\|_K$ $\forall$ $f,g\in\mathcal{D}_K$.
\item If $f\in\widetilde{\mathcal{D}}_K$, then for every $r>0$ there is $\delta>0$ such that $B_{\delta}^{d_K}(f)\subset B_{r}^{\widetilde{d}_K}(f)$. Moreover, the metrics $d_K$ and $\widetilde{d}_K$ generate the same topology on $\widetilde{\mathcal{D}}_K$.
\item If $f$ is continuous, then for every $r>0$ there is $\delta>0$ such that $B_{\delta}^{
\widetilde{d}_K}(f)\subset B_{r}^{\|\cdot\|_K}(f)$. Moreover, the metrics $d_K$ and $\widetilde{d}_K$ and $\|\cdot\|_K$ generate the same topology on the space of continuous functions on $K$.
\item $(\widetilde{\mathcal{D}}_K,\widetilde{d}_K)$ is a Polish space.
\end{enumerate}
\end{lemma}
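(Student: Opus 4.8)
The plan is to establish the five items roughly in the order stated, reusing the groundwork from Lemmas \ref{l5}, \ref{l6} and \ref{l8}. For (i), the only non-routine point is the triangle inequality. I would argue that $\widetilde{d}_K$ is, up to identifying it with a Skorohod-type metric on the interval $[a^1,b^1]$ with values in the Banach space $C(K_2)$, exactly the classical Billingsley/Skorohod construction, so the triangle inequality follows from the composition property of time changes: given $\lambda,\mu\in\Lambda_{[a^1,b^1]}$ one composes them, uses $\interleave \mu\circ\lambda\interleave \le \interleave\mu\interleave + \interleave\lambda\interleave$, and bounds $\sup_{(t,\xi)}|f(t,\xi)-h(\mu(\lambda(t)),\xi)|$ by the two separate sup-terms after a change of variable. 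Symmetry uses that $\lambda\mapsto\lambda^{-1}$ preserves $\interleave\cdot\interleave$, and positivity/separation uses that $g\circ\lambda\to g$ uniformly as $\interleave\lambda\interleave\to 0$ when $g$ is, say, continuous from above in the first variable — but more simply, $d_K(f,g)\le\widetilde{d}_K(f,g)$ (item (ii)) already forces $\widetilde d_K(f,g)=0\Rightarrow f=g$ via Lemma \ref{l6}.

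Item (ii) is immediate: the left inequality holds because every $\lambda_1\in\Lambda_{[a^1,b^1]}$ gives an element $(\lambda_1,\mathrm{id},\ldots,\mathrm{id})\in\Lambda_K$ with $\interleave(\lambda_1,\mathrm{id},\ldots)\interleave_K=\interleave\lambda_1\interleave_{[a^1,b^1]}$, so the infimum defining $d_K$ is over a larger set; the right inequality follows by taking $\lambda_1=\mathrm{id}$ in the infimum defining $\widetilde d_K$. Item (iv) is then a short argument: if $f$ is continuous on the compact set $K$ it is uniformly continuous, so for $r>0$ pick $\eta$ with $|f(s,\xi)-f(t,\xi)|<r/2$ whenever $|s-t|<\eta$; by Lemma \ref{l8} choose $\delta<r/2$ with $\interleave\lambda_1\interleave<\delta\Rightarrow\sup_t|\lambda_1(t)-t|<\eta$. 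If $\widetilde d_K(f,g)<\delta$, pick a near-optimal $\lambda_1$; then $\|f-g\|_K\le \sup_{(t,\xi)}|f(t,\xi)-g(\lambda_1(t),\xi)| + \sup_{(t,\xi)}|g(\lambda_1(t),\xi)-f(\lambda_1(t),\xi)| + \sup|f(\lambda_1(t),\xi)-f(t,\xi)|$, and combining with $d_K\le\widetilde d_K\le\|\cdot\|_K$ (or iterating) gives the ball inclusion; the topological equivalence statement follows since $\|\cdot\|_K\ge\widetilde d_K\ge d_K$ always.

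The heart of the lemma is item (iii), and this is where I expect the main obstacle. The inequality $d_K\le\widetilde d_K$ is the easy direction; the content is that on $\widetilde{\mathcal D}_K$ the reverse "modulus of equivalence" holds. Given $f\in\widetilde{\mathcal D}_K$ and $r>0$, I would use Lemma \ref{l5} to get $\delta_0$ controlling the oscillation of $f(t,\cdot)$ in $\xi$ uniformly in $t$, together with Lemma \ref{l1}/\ref{l8} to control the first-coordinate time change. The subtlety is that a $d_K$-optimal $\lambda=(\lambda_1,\ldots,\lambda_d)$ may move the last $d-1$ coordinates, whereas $\widetilde d_K$ only allows reparametrizing the first; so from a small-$d_K$ approximation $\interleave\lambda\interleave_K+\|f-g\circ\lambda\|_K<\delta$ I must produce a small $\widetilde d_K$ bound using only $\lambda_1$. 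The trick: since $\interleave\lambda_k\interleave$ is small for $k\ge 2$, Lemma \ref{l8} makes $\lambda_k$ uniformly close to the identity; then $g(\lambda_1(t),\lambda_2(\xi^2),\ldots)$ differs from $g(\lambda_1(t),\xi^2,\ldots)$ by at most the oscillation of $g(\lambda_1(t),\cdot)$ over a small $\xi$-ball. To bound that oscillation I transfer it back to $f$ (which is controlled by Lemma \ref{l5}) at the cost of $2\|f-g\circ\lambda\|_K$, noting $g(\lambda_1(t),\eta) = [g\circ\lambda](t,\lambda^{-1}\text{-image})$ — here one must be slightly careful that $\lambda_2,\ldots,\lambda_d$ are invertible with inverses again near the identity, which they are. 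Putting the pieces together yields $\widetilde d_K(f,g)\le \interleave\lambda_1\interleave + \|f-g\circ\lambda\|_K + (\text{oscillation term}) < r$, giving the ball inclusion; the "same topology" claim then follows from (ii).

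Finally, item (v): $\widetilde{\mathcal D}_K$ is a subspace of the Polish space $(\mathcal D_K,d_K)$ by Lemma \ref{l6}, and by (iii) the topology induced on it by $\widetilde d_K$ agrees with the subspace topology, so separability is inherited; for completeness it suffices to show $\widetilde{\mathcal D}_K$ is a $G_\delta$ (equivalently, closed) subset of $\mathcal D_K$, or to verify directly that a $\widetilde d_K$-Cauchy sequence in $\widetilde{\mathcal D}_K$ converges — since it is then $d_K$-Cauchy it has a $d_K$-limit $f\in\mathcal D_K$, and one checks $f\in\widetilde{\mathcal D}_K$ (continuity of $f(t,\cdot)$ and càdlàg-ness of $f(\cdot,\xi)$ pass to the limit because, by (iii) and the equivalence of the metrics near $f$, the convergence is in fact $\widetilde d_K$, i.e. uniform after a first-coordinate time change). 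This last verification — that the limit stays in $\widetilde{\mathcal D}_K$ — is the only delicate point of (v), and it hinges precisely on item (iii) having been established first.
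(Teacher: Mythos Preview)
Your overall architecture matches the paper's, but there are two concrete slips and one unnecessary detour worth flagging.

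For (iv), your three-term decomposition is circular: the middle term $\sup_{(t,\xi)}|g(\lambda_1(t),\xi)-f(\lambda_1(t),\xi)|$ equals $\|f-g\|_K$ after the change of variable $s=\lambda_1(t)$, so the inequality reduces to $\|f-g\|_K\le (\text{small})+\|f-g\|_K+(\text{small})$, which gives nothing. The paper uses the two-term split $|f(t,\xi)-g(t,\xi)|\le |f(t,\xi)-f(\lambda(t),\xi)|+|f(\lambda(t),\xi)-g(t,\xi)|$, where the first piece is handled by uniform continuity of $f$ and the second by the definition of $\widetilde d_K$.

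For (iii), your plan to control the $\xi$-oscillation of $g$ by transferring it back to $f$ via $2\|f-g\circ\lambda\|_K$ can be made to work, but it is a detour. The paper avoids it by exploiting the symmetry of $d_K$: instead of taking $\|f-g\circ\lambda\|_K$ small, take $\|g-f\circ(\lambda_1,\lambda)\|_K$ small. Then one bounds $|g(t,\xi)-f(\lambda_1(t),\xi)|\le |g(t,\xi)-f(\lambda_1(t),\lambda(\xi))|+|f(\lambda_1(t),\lambda(\xi))-f(\lambda_1(t),\xi)|$, and the oscillation term now involves $f$, to which Lemma~\ref{l5} applies directly. No inversion of $\lambda_2,\ldots,\lambda_d$ is needed.

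For (v), your sketch leans on (iii), but (iii) is stated only for balls centered at points \emph{already} in $\widetilde{\mathcal D}_K$, so invoking it for the as-yet-unknown limit $f$ is circular. One can salvage the idea by applying (iii) at a late $f_N$ rather than at $f$, but the paper takes the cleaner route: it shows directly that $\widetilde{\mathcal D}_K$ is $d_K$-closed by fixing $\epsilon>0$, choosing $n$ with $d_K(f,f_n)<\epsilon/3$, applying Lemma~\ref{l5} to $f_n$, and estimating $|f(t,\xi)-f(t,\eta)|$ via the time change $(\lambda_{n,1},\lambda_n)$ witnessing $d_K(f,f_n)<\epsilon/3$. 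This argument does not use (iii) at all.
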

\begin{proof}
It is straightforward to see that (ii) holds. The proof of (i) follows along the lines of the proof of the analogous results for the classical Skorohod metric (see Chapter 3 of \cite{bi}). For the sake of brevity we omit these arguments. For (iii) we use Lemma \ref{l5}. Let $f\in\widetilde{\mathcal{D}}_K$, $r>0$ and take $\delta_1>0$ such that the conclusions of Lemma \ref{l5} hold with $\frac{r}{3}$ replacing $\epsilon$. Also, consider $\delta_2>0$ such that $\interleave \lambda \interleave_{K_2} <\delta_2$ implies $\displaystyle \sup_{\xi\in K_2}\{|\lambda(\xi)-\xi|\}<\delta_1$ (whose existence is a consequence of Lemma \ref{l8} applied to each of the intervals $[a^2,b^2],\ldots,[a^d,b^d]$). Let $\delta = \delta_2\land\frac{r}{3}$ and take $g\in B_{\delta}^{d_K}(f)$. Find $(\lambda_1,\lambda)\in\Lambda_K = \Lambda_{K_1}\times\Lambda_{K_2}$ such that $\interleave (\lambda_1,\lambda)\interleave_K<\delta$ and $\|g-f\circ (\lambda_1,\lambda)\|_K<\frac{r}{3}$. Then, for any $(t,\xi)\in K_1\times K_2$ we have:
\begin{eqnarray*}
|g(t,\xi) - f(\lambda_1(t),\xi)| &\leq& |g(t,\xi) - f(\lambda_1(t),\lambda(\xi))|+ |f(\lambda_1(t),\lambda(\xi)) - f(\lambda_1(t),\xi)|\\
 &<& \frac{r}{3} + \frac{r}{3},
\end{eqnarray*}
where the second term in the sum of the right-hand side of the first inequality in the preceding display is less than $\frac{r}{3}$ because of Lemma \ref{l5} since $\interleave \lambda \interleave_{K_2}<\delta_2$. Taking supremum over $(t,\xi)\in K$ and considering that $\interleave \lambda_1 \interleave_{K_1}<\frac{r}{3}$ we get that $\widetilde{d}_K(f,g) < r$. Thus, $B_{\delta}^{d_K}(f)\subset B_{r}^{\widetilde{d}_K}(f)$. Taking (ii) into account we can conclude that $\widetilde{d}_K$ and $d_K$ are equivalent metrics on $\widetilde{\mathcal{D}}_K$.

We now turn out attention to (iv). Let $r >0$. Then, there is $\delta_1>0$ such that $|f(x) - f(y)| < \frac{r}{2}$ whenever $|x-y|<\delta_1$. Also, there is $\delta_2>0$ such that $\interleave \lambda \interleave_{K_1} <\delta_2$ implies $\displaystyle \sup_{t\in K_1}\{|\lambda(t) - t|\}<\delta_1$. Let $\delta = \delta_2\land\frac{r}{2}$ and let $g\in\mathcal{D}_K$ with $\widetilde{d}_K(f,g)<\delta$ and $\lambda\in\Lambda_{K_1}$ such that $\interleave \lambda \interleave_{K_1} + \|g(\cdot,\cdot) - f(\lambda(\cdot),\cdot)\|_{K_1\times K_2} < \delta$. Then, for any $(t,\xi)\in K_1\times K_2$ we have
\[ |f(t,\xi) - g(t,\xi)| \leq |f(t,\xi) - f(\lambda(t),\xi)| + |f(\lambda(t),\xi) - g(t,\xi)| < r. \]
Thus, $B_{\delta}^{\widetilde{d}_K}(f)\subset B_{r}^{\|\cdot\|_K}(f)$.

To prove (v) it suffices to show that $\widetilde{\mathcal{D}}_K$ is a closed subset of $\mathcal{D}_K$, as the latter space is known to be Polish (see \cite{neu}). Let $(f_n)_{n=1}^\infty$ be a sequence in $\widetilde{\mathcal{D}}_K$ such that $f_n\stackrel{d_K}{\longrightarrow} f$ for some $f\in\mathcal{D}_K$. We will show that $f(t,\cdot)$ is continuous for every $t$ and that will imply that $f\in\widetilde{\mathcal{D}}_K$ since $f$ is automatically componentwise c\'adl\'ag. Let $(t,\xi)\in K_1\times K_2 = K$ and $\epsilon>0$. Consider $n\in\mathbb{N}$ large enough so that $d_K(f,f_n) < \frac{\epsilon}{3}$ and take $\delta_1>0$ such that the conclusions of Lemma \ref{l5} hold true for $f_n$ and $\frac{\epsilon}{3}$. Let $(\lambda_{n,1},\lambda_n)\in\Lambda_{K_1}\times\Lambda_{K_2}$ such that $\interleave (\lambda_{n,1},\lambda_n)\interleave_K  + \|f - f_n\circ(\lambda_{n,1},\lambda_n)\|_K < \frac{\epsilon}{3}$. Since $\lambda_n$ is continuous, there is $\delta>0$ such that $|\xi - \eta|<\delta$ implies $|\lambda_n(\xi) - \lambda_n(\eta)|<\delta_1$. It follows that $|f_n(\lambda_{n,1}(t),\lambda_n(\xi)) - f_n(\lambda_{n,1}(t),\lambda_n(\eta))|<\frac{\epsilon}{3}$ whenever $|\xi - \eta|<\delta$. Hence,
\begin{eqnarray*}
|f(t,\xi) - f(t,\eta)| &\leq& |f(t,\xi) - f_n(\lambda_{n,1}(t),\lambda_n(\xi))|+|f(t,\eta) - f_n(\lambda_{n,1}(t),\lambda_n(\eta))|\\
 & & + |f_n(\lambda_{n,1}(t),\lambda_n(\xi)) - f_n(\lambda_{n,1}(t),\lambda_n(\eta))|\\
 &<& \epsilon,\ \ \ \forall\ \xi,\eta\in K_2 \textrm{ such that } |\xi-\eta|<\delta.
\end{eqnarray*}
It follows that $f(t,\cdot)$ is continuous for every $t\in K_1$. Hence, $f\in\widetilde{\mathcal{D}}_K$ and $\widetilde{\mathcal{D}}_K$ is closed.
\end{proof}

\noindent {\bf Remark:} Observe that the previous lemma implies that for a convergent sequence in $\mathcal{D}_K$ with a limit in $\widetilde{\mathcal{D}}_K$ convergence in the $\widetilde{d}_K$ and $d_K$ metrics are equivalent. When the limit is continuous, convergence in any of these metrics is equivalent to convergence in the sup-norm topology.

\subsection{The sargmax functional on $\mathcal{D}_K$}\label{s2s3}
We now turn our attention to the smallest argmax functional on $\mathcal{D}_K$.

\begin{defn}[The sargmax Functional]\label{sargmax}
A function $f\in\mathcal{D}_K$ is said to have a maximizer at a point $x\in K$ if any of the quadrant-limits of $x$ equals $\displaystyle \sup_{\xi\in K}\{ f(\xi)\}$. For any $f\in\mathcal{D}_K$ we can define the {\it smallest argmax} of $f$ over the compact rectangle $K$, denoted by $\displaystyle \sargmax_{x\in K}\{f(x)\}$, as the unique element $x=(x^1,\ldots,x^d)\in K$ satisfying the following properties:
\begin{enumerate}[(i)]
\item $x$ is a maximizer of $f$ over $K$,
\item if $\xi=(\xi^1,\ldots,\xi^d)$ is any other maximizer, then $x^1\leq \xi^1$,
\item if $\xi$ is any maximizer satisfying $x^j = \xi^j$ $\forall$ $j=1,\ldots,k$ for some $k\in\{1,\ldots,d-1\}$, then $x^{k+1} \leq \xi^{k+1}$.
\end{enumerate}
We say that $x$ is the largest maximizer of $f$, denoted by $\displaystyle \largmax_{\xi\in K}\{f(\xi)\}$, if it is a maximizer that satisfies $(ii)$ and $(iii)$ above with the inequalities reversed.
\end{defn}

The first question that one might ask is whether or not the sargmax is well defined for all functions in the Skorohod space. Before attempting to give an answer, we will use our notation to clarify the concept of a maximizer: a point $x\in K$ is a maximizer of $f\in\mathcal{D}_K$ if
\begin{equation}\nonumber
\max_{\rho\in\mathcal{V}}\{f(x+0_\rho)\} = \sup_{\xi\in K}\{f(\xi)\}.
\end{equation}
We can now prove a result concerning the set of maximizers of a function in $\mathcal{D}_K$.

\begin{lemma}\label{l3}
$\\$
The set of maximizers of any function in $\mathcal{D}_K$ is compact.
\end{lemma}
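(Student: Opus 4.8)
<br>

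The plan is to show that the maximizer set $M := \{x \in K : \max_{\rho \in \mathcal{V}} f(x+0_\rho) = \sup_K f\}$ is closed, and since $K$ is compact and $M \subseteq K$, this gives compactness. By Lemma \ref{l2}, $f$ is bounded, so $s := \sup_{\xi \in K} f(\xi)$ is a finite real number, and $M$ is nonempty — indeed, taking a sequence $\xi_n \to \xi^*$ with $f(\xi_n) \to s$ and passing to a subsequence that lies in a single partition block $\tilde{Q}(\rho,\xi^*)$ (there are finitely many $\rho \in \mathcal{V}$, and they partition $K$ by the Remark after Definition \ref{d2}), the Remark following Definition \ref{d2} gives $f(\xi_n) \to f(\xi^*+0_\rho)$, so $f(\xi^*+0_\rho) = s$ and $\xi^* \in M$. (Nonemptiness is not strictly required for compactness but is worth recording.)

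The core step is closedness. Let $(x_n)_{n=1}^\infty \subset M$ with $x_n \to x$ in $K$; I must produce some $\rho \in \mathcal{V}$ with $f(x+0_\rho) = s$. The idea is to localize each $x_n$ near $x$ using the finite partition $\{\tilde{Q}(\gamma,x)\}_{\gamma \in \mathcal{V}}$ of $K$. For each $n$, since $x_n$ is a maximizer, pick $\rho_n \in \mathcal{V}$ with $f(x_n + 0_{\rho_n}) = s$; then, by Definition \ref{d1}, choose a point $y_n \in Q(\rho_n, x_n)$ with $|y_n - x_n| < 1/n$ and $|f(y_n) - s| < 1/n$, so that $y_n \to x$ and $f(y_n) \to s$. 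Now each $y_n$ falls into exactly one block $\tilde{Q}(\gamma_n, x)$ of the partition of $K$ based at $x$; since $\mathcal{V}$ is finite, some value $\gamma \in \mathcal{V}$ occurs infinitely often, and along that subsequence $y_{n_k} \in \tilde{Q}(\gamma, x)$ with $y_{n_k} \to x$. By the Remark following Definition \ref{d2}, $f(y_{n_k}) \to f(x + 0_\gamma)$; but $f(y_{n_k}) \to s$, hence $f(x+0_\gamma) = s$ and $x \in M$.

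The one technical point needing care — and the place I expect the main obstacle — is the claim that the auxiliary point $y_n$ can be chosen in $Q(\rho_n, x_n)$ with both $y_n \to x$ and $f(y_n)$ close to $s$: we need $Q(\rho_n, x_n)$ to actually contain points arbitrarily close to $x_n$, i.e. that $x_n$ is a genuine limit point of $Q(\rho_n, x_n)$. This holds provided $x_n$ is not on the ``wrong'' boundary face of $K$ for the direction $\rho_n$; one must check, using the definition of $I_k(s,t)$, that whenever a coordinate $x_n^k$ equals the endpoint that makes $I_k(\rho_n^k, x_n^k)$ empty, the $\rho_n$-limit is still well-defined via the convention — and in fact in that case $Q(\rho_n,x_n)$ may be empty, so one should instead work directly with the block $\tilde{Q}(\rho_n, x_n)$, which by property (b) always contains $x_n$ in its closure (indeed $x_n \in \tilde{Q}(b, x_n)$ always). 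Concretely, I would replace the ``$y_n \in Q(\rho_n,x_n)$'' step by: pick $y_n \in \tilde{Q}(\rho_n, x_n)$ with $|y_n - x_n| < 1/n$ and $|f(y_n) - f(x_n + 0_{\rho_n})| < 1/n$, which is legitimate by the Remark after Definition \ref{d2} since sequences in $\tilde{Q}(\rho_n,x_n)$ converging to $x_n$ drive $f$ to $f(x_n+0_{\rho_n})$, and such sequences exist because $x_n \in \overline{\tilde{Q}(\rho_n,x_n)}$. The rest of the argument goes through verbatim.
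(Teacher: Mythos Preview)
Your proof is correct and follows essentially the same route as the paper's: approximate each maximizer $x_n$ by a nearby point $y_n$ with $f(y_n)$ close to the supremum, pigeonhole the $y_n$ into a single block $\tilde{Q}(\rho_*,x)$ of the finite partition at $x$, and invoke the Remark after Definition~\ref{d2} to obtain $f(x+0_{\rho_*})=s$. If anything you are more explicit than the paper about the boundary technicality (possible emptiness of $Q(\rho_n,x_n)$), and your nonemptiness argument anticipates what the paper proves separately in Lemma~\ref{l4}.
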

\begin{proof}
Let $f\in\mathcal{D}_K$. Since the set of maximizers of $f$ is a subset of the compact rectangle $K$, it suffices to show that any convergent sequence of maximizers converges to a maximizer. Let $(x_n)_{n=1}^\infty$ be a sequence of maximizers with limit $x$. For each $x_n$ we can find $\xi_n$ with $|x_n - \xi_n|<\frac{1}{n}$ and such that $|f(\xi_n) - \max_{\rho\in\mathcal{V}}\{f(x_n+0_\rho)\}|<1/n$. Then we have that $\xi_n\rightarrow x$ and $|f(\xi_n) - \sup_{\xi\in K}\{f(\xi)\}|<1/n$ $\forall$ $n\in\mathbb{N}$. Since $K$ is the disjoint union of $\{\tilde{Q}(\rho,x)\}_{\rho\in\mathcal{V}}$, it follows that there is $\rho_*\in\mathcal{V}$ and a subsequence $(\xi_{n_k})_{k=1}^\infty$ such that $\xi_{n_k}\in\tilde{Q}(\rho_*,x)$ $\forall$ $k\in\mathbb{N}$. Therefore, the remark stated right after the definition of the Skorohod space implies that $f(\xi_{n_k})\rightarrow f(x+0_{\rho_*})$ and, consequently, $\displaystyle f(x+0_{\rho_*}) = \sup_{\xi\in K}\{f(\xi)\}$.
\end{proof}

The previous lemma can be used to show that the sargmax functional is well defined on $\mathcal{D}_K$.

\begin{lemma}\label{l4}
$\\$
For each $f\in\mathcal{D}_K$ there is a unique element in $x\in K$ such that $\displaystyle x = \sargmax_{\xi\in K}\{f(\xi)\}$.
\end{lemma}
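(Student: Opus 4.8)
The plan is to establish existence and uniqueness separately, using Lemma \ref{l3} as the essential input. For existence, I would construct the sargmax by a greedy coordinate-by-coordinate minimization over the set of maximizers. Let $M \subseteq K$ denote the set of maximizers of $f$, which is nonempty (since $f$ attains its supremum up to quadrant limits, and a routine argument via the partition $\{\tilde{Q}(\rho,x)\}_{\rho\in\mathcal{V}}$ shows $M\neq\emptyset$) and compact by Lemma \ref{l3}. Define $x^1 := \min\{\xi^1 : \xi \in M\}$, which exists because the projection of the compact set $M$ onto the first coordinate is compact. Then set $M_1 := \{\xi \in M : \xi^1 = x^1\}$; this is a closed, hence compact, subset of $M$. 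Inductively, having defined $x^1,\ldots,x^k$ and the compact nonempty set $M_k = \{\xi\in M : \xi^j = x^j,\ j\le k\}$, put $x^{k+1} := \min\{\xi^{k+1} : \xi\in M_k\}$ and $M_{k+1} := \{\xi\in M_k : \xi^{k+1} = x^{k+1}\}$. After $d$ steps we obtain a point $x=(x^1,\ldots,x^d)$. Since $M_d \subseteq M_{d-1} \subseteq \cdots \subseteq M_1 \subseteq M$ is a nested chain of nonempty compacts, $M_d$ is nonempty, and by construction its unique element is $x$; in particular $x \in M$, so $x$ is a maximizer, i.e.\ property (i) holds.

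Next I would verify that this $x$ satisfies properties (ii) and (iii). Property (ii) is immediate: any maximizer $\xi \in M$ has $\xi^1 \ge \min\{\eta^1 : \eta\in M\} = x^1$. For (iii), suppose $\xi \in M$ satisfies $\xi^j = x^j$ for $j=1,\ldots,k$. Then by definition $\xi \in M_k$, so $\xi^{k+1} \ge \min\{\eta^{k+1} : \eta\in M_k\} = x^{k+1}$, which is exactly what (iii) requires. This shows $x$ is a sargmax in the sense of Definition \ref{sargmax}, establishing existence.

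For uniqueness, suppose $x$ and $y$ both satisfy (i)–(iii). Applying (ii) to $y$ (as a maximizer) against $x$ gives $x^1 \le y^1$, and symmetrically $y^1 \le x^1$, so $x^1 = y^1$. Now proceed by induction: if $x^j = y^j$ for $j = 1,\ldots,k$, then applying (iii) with $\xi = y$ gives $x^{k+1} \le y^{k+1}$, and applying (iii) with the roles of $x$ and $y$ swapped gives $y^{k+1} \le x^{k+1}$, hence $x^{k+1} = y^{k+1}$. After $d$ steps, $x = y$.

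The main obstacle is really the bookkeeping at the start of the existence argument, namely confirming that $M$ is nonempty and that each projection $\{\xi^{k+1} : \xi \in M_k\}$ attains its minimum; both reduce to the fact that continuous images (here, coordinate projections) of compact sets are compact, combined with Lemma \ref{l3} to get compactness of $M$, and the argument in the proof of Lemma \ref{l3} (passing to a subsequence landing in a single cell $\tilde{Q}(\rho_*,x)$ and using continuity from above) to get $M \neq \emptyset$. Once compactness of the nested sets $M_k$ is in hand, the coordinate-wise selection and the verification of (i)–(iii) are routine, and uniqueness follows by the symmetric induction sketched above.
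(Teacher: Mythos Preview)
Your proof is correct and follows essentially the same approach as the paper: establish that the set of maximizers $M$ is nonempty (via a supremizing sequence, a convergent subsequence in $K$, and a further subsequence lying in a single $\tilde{Q}(\rho_*,x)$) and compact (Lemma \ref{l3}), then extract the sargmax from $M$. The paper compresses the last step into the single sentence ``the compactness will imply that there is a unique element $x\in K$ satisfying properties (i), (ii) and (iii),'' whereas you spell out the coordinate-by-coordinate greedy selection and the symmetric induction for uniqueness; this is exactly the routine argument the paper is gesturing at, so the two proofs coincide in substance.
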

\begin{proof}
Let $f\in\mathcal{D}_K$. Since the set of maximizers of $f$ is compact, if we can show that it is nonempty then the compactness will imply that there is a unique element $x\in{K}$ satisfying properties (i), (ii) and (iii) of Definition \ref{sargmax}. Hence, it suffices to show that $f$ has at least one maximizer. For this purpose, for each $n\in\mathbb{N}$ choose $x_n$ such that $\displaystyle \sup_{\xi\in K}\{f(\xi)\}< f(x_n) + \frac{1}{n}$. Since $K$ is compact, there is $x\in K$ and a subsequence $(x_{n_k})_{k=1}^\infty$ such that $x_{n_k}\rightarrow x$. Just as in the proof of the previous lemma, we can find $\rho_*\in\mathcal{V}$ and a further subsequence $(x_{n_{k_s}})_{s=1}^\infty$ such that $x_{n_{k_s}}\in\tilde{Q}(\rho_*,x)$ $\forall$ $s\in\mathbb{N}$. It follows that $f(x_{n_{k_s}})\rightarrow f(x+0_{\rho_*})$ and hence $\displaystyle \sup_{\xi\in K}\{f(\xi)\} = f(x+0_{\rho_*})$. Therefore, the set of maximizers is nonempty and the sargmax is well defined.
\end{proof}

We finish this section with a continuity theorem for the sargmax functional on continuous functions.

\begin{lemma}\label{l9}
$\\$
Let $W\in\mathcal{D}_K$ be a continuous function which has a unique maximizer $x^* \in K$. Then, the smallest argmax functional is continuous at $W$ (with respect to $d_K$, $\widetilde{d}_K$ and the sup-norm metric).
\end{lemma}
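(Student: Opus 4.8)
The plan is to reduce the claim to ordinary sup-norm convergence and then run the classical argmax-continuity argument, taking care that the approximating functions lie in $\mathcal{D}_K$ and so ``maximizer'' has to be read in the quadrant-limit sense of Definition \ref{sargmax}. Two preliminary observations make this reduction. First, since $W$ is continuous, all of its quadrant limits at any point $x$ coincide with $W(x)$, so a point is a maximizer of $W$ in the sense of Definition \ref{sargmax} precisely when it attains $\sup_{\xi\in K}W(\xi)$; hence the hypothesis that $W$ has a unique maximizer $x^*$ says exactly that $\sargmax_{\xi\in K}W(\xi)=x^*$ is the ordinary argmax. Second, by Lemma \ref{l7}(ii)--(iv) (as recorded in the remark following it), a continuous limit is approached in $d_K$, in $\widetilde{d}_K$ and in $\|\cdot\|_K$ simultaneously or not at all. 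Consequently it suffices to prove: if $(f_n)_{n\ge 1}\subset\mathcal{D}_K$ satisfies $\|f_n-W\|_K\to 0$, then $x_n:=\sargmax_{\xi\in K}f_n(\xi)$ (well defined by Lemma \ref{l4}) converges to $x^*$. Write $M:=\sup_{\xi\in K}W(\xi)=W(x^*)$ and $M_n:=\sup_{\xi\in K}f_n(\xi)$, and note $|M_n-M|\le\|f_n-W\|_K\to 0$.

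The key step is to show that $W(x_n)\to M$. Fix $n$. Since $x_n$ is a maximizer of $f_n$, there is $\rho_n\in\mathcal{V}$ with $\tilde{Q}(\rho_n,x_n)\neq\emptyset$ and $f_n(x_n+0_{\rho_n})=M_n$; choose any sequence $(y_{n,m})_{m\ge 1}\subset\tilde{Q}(\rho_n,x_n)$ with $y_{n,m}\to x_n$ as $m\to\infty$. By the remark following Definition \ref{d2} we have $f_n(y_{n,m})\to M_n$, while continuity of $W$ gives $W(y_{n,m})\to W(x_n)$; hence
\[ |W(x_n)-M_n| \;=\; \lim_{m\to\infty}\bigl|W(y_{n,m})-f_n(y_{n,m})\bigr| \;\le\; \|W-f_n\|_K, \]
and therefore $|W(x_n)-M|\le\|W-f_n\|_K+|M_n-M|\le 2\|W-f_n\|_K\to 0$. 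Thus $W(x_n)\to M=\sup_{\xi\in K}W(\xi)$.

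Finally, the conclusion follows from compactness of $K$: it is enough to check that every convergent subsequence of $(x_n)$ has limit $x^*$. If $x_{n_k}\to\bar{x}\in K$, then by continuity of $W$ and the previous step $W(\bar{x})=\lim_k W(x_{n_k})=M=\sup_{\xi\in K}W(\xi)$, so $\bar{x}$ is a maximizer of $W$ and hence $\bar{x}=x^*$ by uniqueness. Therefore $x_n\to x^*=\sargmax_{\xi\in K}W(\xi)$, which is the asserted continuity in the sup-norm, and hence (by the first paragraph) in $d_K$ and $\widetilde{d}_K$ as well. I expect the only genuinely delicate points to be the two reductions in the first paragraph and the extraction of the approximating sequence $(y_{n,m})$ in the second: because the $f_n$ need not be continuous one cannot evaluate them at $x_n$ directly and must work through the quadrant limits, but this is exactly the device already used in the proofs of Lemmas \ref{l3} and \ref{l4}, so no new difficulty arises.
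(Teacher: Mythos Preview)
Your proof is correct. Both arguments first invoke Lemma~\ref{l7} to reduce to sup-norm convergence; after that they diverge slightly. The paper gives a direct $\epsilon$--$\delta$ argument: for a fixed ball $G$ around $x^*$ it sets $\delta=\tfrac12\bigl(W(x^*)-\sup_{K\setminus G}W\bigr)$ and shows that once $\|W_n-W\|_K<\delta$ every maximizer of $W_n$ must lie in (the closure of) $G$. You instead first establish the value statement $W(x_n)\to W(x^*)$ by approximating $x_n$ through a sequence in the appropriate quadrant $\tilde{Q}(\rho_n,x_n)$, and then conclude via compactness and a subsequence argument. The paper's route is a little shorter, but yours is arguably cleaner about the fact that maximizers of the approximating $f_n$ are defined through quadrant limits rather than pointwise values---a subtlety the paper's sentence ``any maximizer of $W_n$ must belong to $G$'' glosses over (strictly, it only yields $\overline{G}$). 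Either way the content is the same standard argmax-continuity idea.
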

\begin{proof}
Let $(W_n)_{n=1}^\infty$ be a sequence converging to $W$ in the Skorohod topology. Let  $\epsilon>0$ be given and $G$ be the open ball of radius $\epsilon$ around $x^*$ and let $\delta := \left(W(x^*) - \sup_{x\in K\setminus G}\left\{W(x)\right\}\right)/2 > 0$. By Lemma \ref{l7} we have $\left\|W_n - W\right\|_K<\delta$ for all large $n$ ($d_K$, $\widetilde{d}_K$ and $\|\cdot\|_K$ generate the same local topology on $W$). Then
\[ W(x^*)=2\delta + \sup_{x\in K\setminus G}\left\{W(x)\right\}> \delta + \sup_{x\in K\setminus G}\left\{W_n(x)\right\}.\]
But $\left\|W_n - W\right\|_K<\delta$ also implies that $\displaystyle \sup_{x\in K}\{W_n(x)\} > W(x^*)-\delta$. The combination of these two facts shows that if $\left\|W_n - W\right\|_K<\delta$, then any maximizer of $W_n$ must belong to $G$. Thus, $|\sargmax_{x \in K} \{W_n(x)\} - x^*|<\epsilon$ for $n$ large enough.
\end{proof}

\section{A continuous mapping theorem for the sargmax functional on functions with jumps}\label{s3}
Lemma \ref{l9} shows that the sargmax functional is continuous on continuous functions with unique maximizers. However, its raison d'\^etre is to fix a unique maximizer on a function having multiple maximizers. Thus, a continuous mapping theorem on functions with jumps and possibly multiple maximizers is desired. We will show a version of the continuous mapping theorem on a suitable subset of our space $\widetilde{\mathcal{D}}_K$.

To state and prove our version of the continuous mapping theorem for the sargmax functional, we need to introduce some notation. We start with the space $\mathcal{D}_K^0$ consisting of all functions $\psi:K_1\times K_2\rightarrow\mathbb{R}$ which can be expressed as:
\begin{eqnarray}
    \function{\psi}{t,\xi} = V_0 (\xi) \ind{a_{-1} \leq t < a_{1}} + \sum_{k=1}^\infty V_{k} (\xi)\ind{a_{k} \leq t < a_{k+1}} 
    + \sum_{k=1}^\infty  V_{-k}(\xi)\ind{a_{-k-1} \leq t < a_{-k}} \label{ec58}
\end{eqnarray}
where $\left(\ldots<a_{-k-1}<a_{-k}<\ldots< a_0 = 0<\ldots<a_{k}<a_{k+1}<\ldots\right)_{k\in\mathbb{N}}$ is a sequence of jumps and $\left(V_k\right)_{k\in\mathbb{Z}}$ is a collection of continuous functions. Note that $\mathcal{D}_K^0\subset\widetilde{\mathcal{D}}_K$. Observe that the representation in (\ref{ec58}) is not unique. However, knowledge of the function $\psi$ and of the jumps $(a_k)_{k\in\mathbb{Z}}$ completely determines the continuous functions $(V_k)_{k\in\mathbb{Z}}$.

Our theorem will require not only Skorohod convergence of the elements of $\mathcal{D}_K^0$, but also convergence of their associated {\it pure jump functions}. To define properly these jump functions, we introduce the space $\mathcal{S}$ all piecewise constant, c\'adl\'ag functions $\tilde{\psi}:\mathbb{R}\rightarrow\mathbb{R}$ such that $\tilde{\psi}(0)=0$; $\tilde{\psi}$ has jumps of size 1; and $\tilde{\psi}(-t)$ and $\tilde{\psi}(t)$ are nondecreasing on $(0,\infty)$. For any closed interval $I\subset\mathbb{R}$ we introduce the space $\mathcal{S}_I:=\{f|_I: f\in\mathcal{S}\}$. We endow the spaces $\mathcal{S}_I$ with the usual Skorohod topology $d_I$. Observe that the fact that all elements of $\mathcal{S}$ are c\'adl\'ag and have jumps of size one implies that {\it any function in $\mathcal{S}_I$ has a finite number of jumps on I}.

We associate with every $\psi\in\mathcal{D}_K^0$, expressed as in (\ref{ec58}), a pure jump function $\tilde \psi \in \mathcal{S}$ whose sequence of jumps is exactly the $a_k$'s, i.e.,
\begin{eqnarray}\label{eq:PureJumpProc}
\function{\tilde{\psi}}{t} &=& \sum_{k=1}^\infty \ind{a_{k} \leq t }+
\sum_{k=1}^\infty  \ind{a_{-k} > t}.
\end{eqnarray}

We will show that Skorohod-convergence of functions in $\mathcal{D}_K^0$ and Skorohod convergence of their associated pure jump functions implies convergence of the corresponding sargmax and largmax functionals.

The following convergence result is a generalization of both, Lemma 3.1 of \cite{lmm} and Lemma A.3 in \cite{seseboot}.

\begin{thm}\label{t1}
$\\$
Assume that $d\geq 2$ and let $\left(\psi_n,\tilde{\psi}_n\right)_{n=1}^\infty$, $(\psi_0,\tilde{\psi}_0)$ be functions in $\mathcal{D}_K^0 \times \mathcal{S}_{K_1}$ such that $\psi_n$ satisfies (\ref{ec58}) for the sequence of jumps of $\tilde{\psi}_n$ for any $n\geq 0$. Assume that $(\psi_n,\tilde{\psi}_n)\rightarrow (\psi_0,\tilde{\psi}_0)$ in $\mathcal{D}_K^0 \times \mathcal{S}_{K_1}$ (with the product topology). Suppose, in addition, that $\psi_0$ can be expressed as (\ref{ec58}) for the sequence of jumps $\left(\ldots<a_{-k-1}<a_{-k}< \ldots < a_0 = 0< \ldots< a_{k} \right.$ $\left. < a_{k+1}< \ldots\right)_{k\in\mathbb{N}}$ of $\tilde{\psi}_0$ and some continuous functions $(V_j)_{j\in\mathbb{Z}}$, each having a unique maximizer on $K_2$, with the property that for any finite subset $A\subset\mathbb{Z}$ there is only one $j\in A$ for which
\begin{equation}\label{ec24}
\max_{m\in A}\left\{\sup_{\xi\in K_2}\left\{V_m(\xi)\right\}\right\} = \sup_{\xi\in K_2}\left\{V_j(\xi)\right\}.
\end{equation}
Finally, assume that $\psi_0$ has no jumps at the extreme points of $K_1$.
Then,
\begin{enumerate}[(i)]
\item $\displaystyle \sargmax_{x\in K}\{\psi_n(x)\} \rightarrow \sargmax_{x\in K}\{\psi_0(x)\}$ as $n\rightarrow\infty$;
\item $\displaystyle \largmax_{x\in K}\{\psi_n(x)\} \rightarrow \largmax_{x\in K}\{\psi_0(x)\}$  as $n\rightarrow\infty$.
\end{enumerate}
The result is also true when $d=1$ under the same assumptions, but taking the sequence $(V_j)_{j\in\mathbb{Z}}$ to be a sequence of constants such that for any finite subset $A\subset\mathbb{Z}$ there is a unique $j\in A$ such that $\displaystyle \max_{m\in A}\{V_m\} = V_j$.
\end{thm}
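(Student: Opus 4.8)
The plan is to reduce the $d$-dimensional problem to a one-dimensional one by exploiting the special structure of $\mathcal{D}_K^0$: on each slab $[a_k,a_{k+1})$ the function $\psi_0$ is the continuous function $V_k$ on $K_2$, and each $V_k$ has a unique maximizer, say $\xi_k^* \in K_2$, with value $v_k := \sup_{\xi\in K_2} V_k(\xi)$. Condition (\ref{ec24}) says that among any finite collection of these ``slab-values'' $v_k$ there is a strict maximum, so the global supremum of $\psi_0$ over $K$ is attained on exactly one slab, say slab $k_0$, and the sargmax of $\psi_0$ is $(a_{k_0}, \xi_{k_0}^*)$ (using continuity from above in the first coordinate, so the left endpoint $a_{k_0}$ is selected, and uniqueness of $\xi_{k_0}^*$ for the remaining coordinates). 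The largmax is $(a_{k_0+1}^-,\xi_{k_0}^*)$ in the appropriate sense — more precisely the point whose first coordinate is the supremum of the $k_0$-th slab; since $\psi_0$ has no jump at the endpoints of $K_1$ this is an interior description and is unambiguous.

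First I would set up the approximation. Fix $\epsilon>0$. Since the relevant part of $\psi_0$ near its maximum lives on finitely many slabs, choose a large finite ``window'' $[-T,T]\subset K_1$ of slabs such that (a) slab $k_0$ is strictly interior to the window, (b) the maximum over all slabs is strictly larger than the maximum over any slab whose closure meets the complement of the window — this uses (\ref{ec24}) applied to a finite set together with Lemma \ref{l2} (boundedness) to control the tail slabs, or more carefully an argument that the $\limsup$ of $v_k$ as $k\to\pm\infty$ is strictly below $v_{k_0}$, which follows because $\psi_0\in\mathcal{D}_K^0\subset\widetilde{\mathcal D}_K$ is bounded and each finite sub-family has a unique argmax. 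Set $\eta := v_{k_0} - \max\{v_k : \text{slab } k \text{ touches the window boundary}\} > 0$, and also use Lemma \ref{l9} for the single function $V_{k_0}$ to get $\eta' > 0$ such that any $\xi$ with $V_{k_0}(\xi) > v_{k_0} - \eta'$ lies within $\epsilon$ of $\xi_{k_0}^*$.

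Next I would transfer this to $\psi_n$. Convergence of $\tilde\psi_n \to \tilde\psi_0$ in $\mathcal{S}_{K_1}$ means, since elements of $\mathcal{S}_{K_1}$ are piecewise constant with unit jumps and finitely many of them, that for $n$ large the jump locations of $\tilde\psi_n$ inside the window $[-T,T]$ are in bijection with those of $\tilde\psi_0$ and converge to them; in particular $\psi_n$ has, on the window, exactly the same slab-combinatorics as $\psi_0$, with the $n$-th slab endpoints $a_k^{(n)} \to a_k$. Simultaneously, convergence $\psi_n\to\psi_0$ in $\mathcal{D}_K^0$ — which by Lemma \ref{l7}(iii), since $\psi_0 \in \widetilde{\mathcal D}_K$, may be taken in the $\widetilde d_K$ metric, i.e. a time-change $\lambda$ in the first coordinate only with $\sup_{(t,\xi)}|\psi_n(t,\xi) - \psi_0(\lambda(t),\xi)|$ small — forces the continuous slab-functions to converge: on each fixed window slab $k$, $\|V_k^{(n)} - V_k\|_{K_2} \to 0$. (Here one must check that the $\widetilde d_K$-time-change $\lambda$ for $\psi_n$ can be aligned with the $d_{K_1}$-time-change coming from $\tilde\psi_n\to\tilde\psi_0$ on the window; this is the kind of bookkeeping step where one matches the two reparametrizations on the finitely many window slabs and absorbs the mismatch into the $\epsilon$-budget.) Consequently $\sup_{\xi} V_{k_0}^{(n)}(\xi) \to v_{k_0}$ and its argmax converges to $\xi_{k_0}^*$, while for every other window slab $k\neq k_0$ we have $\sup_\xi V_k^{(n)}(\xi) \to v_k < v_{k_0}$, and for slabs outside the window $\sup_\xi \psi_n < v_{k_0} - \eta/2$ for $n$ large (using uniform closeness of $\psi_n$ to $\psi_0$ over all of $K$ in the $\widetilde d_K$ metric, plus the tail estimate above). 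Putting this together: for $n$ large the global max of $\psi_n$ over $K$ exceeds $v_{k_0} - \eta'\wedge(\eta/2)$ and is attained only on the $k_0$-th slab of $\psi_n$ — hence $\sargmax\{\psi_n\}$ has first coordinate equal to $a_{k_0}^{(n)} \to a_{k_0}$ and remaining coordinates within $\epsilon$ of $\xi_{k_0}^*$ (by the choice of $\eta'$ and Lemma \ref{l9} applied to $V_{k_0}^{(n)}\to V_{k_0}$). This gives (i); (ii) is identical with the roles of the slab endpoints reversed, the first coordinate of the largmax being $a_{k_0+1}^{(n)}\to a_{k_0+1}$ (well inside $K_1$ by the no-jump-at-endpoints hypothesis). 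The $d=1$ case is the same argument with each $V_k$ a constant, so the inner optimization over $K_2$ is vacuous and only the slab-selection part remains.

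The main obstacle I expect is the reconciliation of the two independent reparametrizations in the product topology — the first-coordinate time-change implicit in $\psi_n \to \psi_0$ (via $\widetilde d_K$) versus the time-change realizing $\tilde\psi_n \to \tilde\psi_0$ in $\mathcal S_{K_1}$ — and in particular verifying that Skorohod convergence of the pure-jump functions genuinely pins down the slab locations $a_k^{(n)}\to a_k$ and hence that the continuous pieces $V_k^{(n)}$ converge uniformly on $K_2$. Everything else (compactness of the maximizer set, existence/uniqueness of sargmax, continuity of the inner argmax) is supplied by Lemmas \ref{l3}, \ref{l4}, \ref{l9}, and the tail control is a short consequence of Lemma \ref{l2} together with hypothesis (\ref{ec24}).
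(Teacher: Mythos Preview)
Your outline is essentially the paper's proof: pin down the jump locations via $\tilde\psi_n\to\tilde\psi_0$, extract uniform convergence $V_k^{(n)}\to V_k$ on $K_2$ from $\psi_n\to\psi_0$ in the $\widetilde d_K$ metric, and then read off the convergence of the sargmax/largmax from the uniqueness hypotheses. Two remarks are worth making.

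First, the ``window $[-T,T]$'' and tail-control discussion is superfluous and slightly confused: $K_1$ is already a compact interval, and any element of $\mathcal{S}_{K_1}$ has only finitely many jumps, so $\psi_0$ has finitely many slabs on all of $K_1$ from the start. There is no $\limsup_{k\to\pm\infty} v_k$ to worry about; the paper simply lets $N_l,N_r$ be the (finite) numbers of jumps of $\tilde\psi_0$ to the left and right of $0$ and works with those directly. The no-jump-at-endpoints hypothesis is what guarantees that $\tilde\psi_n$ eventually has exactly $N_l+N_r$ jumps in $K_1$.

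Second, the step you flag as the main obstacle --- reconciling the $\widetilde d_K$ time-change with the $\mathcal S_{K_1}$ time-change --- is handled in the paper not by ``aligning'' the two reparametrizations but by a clean trick: pick the midpoints $b_j$ of the slabs of $\psi_0$ (automatically continuity points), and observe that for $n$ large both $b_j$ and $\lambda_n(b_j)$ lie in the $j$-th slab of $\psi_n$ (the slab boundaries of $\psi_n$ are located via the jump convergence, and $|\lambda_n(b_j)-b_j|$ is small by Lemma~\ref{l8}). Hence $\psi_n(\lambda_n(b_j),\xi)=V_{n,j}(\xi)$, and the $\widetilde d_K$ estimate gives $\|V_{n,j}-V_j\|_{K_2}<\eta$ immediately. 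This replaces your ``bookkeeping'' promise with a one-line computation.
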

\begin{proof} We focus on the case when $d>1$ as the one-dimensional case is just Lemma 3.1 of \cite{lmm}.  Without loss of generality, assume that $K_1 = [-C,C]$ for some $C>0$.

We can write $\psi_n$ in the form (\ref{ec58}) with $\left(\ldots<a_{n,-k-1}<a_{n,-k}<\right.$ $\\$ $\left.\ldots < a_{n,0} = 0 < \ldots\right.$ $\left.< a_{n,k}< a_{n,k+1}< \ldots\right)_{k \in \mathbb{N}}$ being the sequence of jumps of $\psi_n$ and $V_{n,j}$ being the continuous functions. Consequently, $\tilde \psi_n$, the pure jump function associated with $\psi_n$, can be expressed as (\ref{eq:PureJumpProc}) with jumps at $(a_{n,k})_{k \in \mathbb{Z}}$.

Let $N_r$ and $N_l$ be the number of jumps of $\tilde{\psi}_0$ in $[0,C]$ and $[-C,0)$ respectively. Let $\epsilon > 0$ be sufficiently small such that all the points of the form $a_j \pm \epsilon$ are continuity points of $\psi_0$, for $-N_l \leq j \leq N_r$. Since convergence in the Skorohod topology of $\tilde{\psi}_n$ to $\tilde{\psi}_0$ implies point-wise convergence for continuity points of $\tilde{\psi}_0$ (see page 121 of \cite{bi}), and all of them are integer-valued functions, we see that $\tilde{\psi}_n (a_j - \epsilon) = j-1$ and $\tilde{\psi}_n (a_j + \epsilon) = j$ for any $1\leq j\leq N_r$, and $\tilde{\psi}_n (C) = N_r$ for all sufficiently large $n$. Thus, for all but finitely many $n$'s we have that $\tilde{\psi}_n$ has exactly $N_r$ jumps between 0 and $C$ and that the location of the $j$-th jump to the right of 0 satisfies $|a_{n,j}-a_{j}|<\epsilon$. Since $\epsilon>0$ can be made arbitrarily small, we get that all the jumps $a_{n,j}$ converge to their corresponding $a_j$ for all $1\leq j\leq N_r$. The same happens to the left of zero: for all but finitely many $n$'s, $\tilde{\psi}_n$ has exactly $N_l$ jumps in $[-C,0)$ and the sequences of jumps $\left(a_{n,-j}\right)_{n=1}^\infty$, $1\leq j\leq N_l$, converge to the corresponding jumps $a_{-j}$.

Let $ V^* = \sup\left\{V_j (\xi): \xi\in K_2, -N_l\leq j \leq N_r\right\}$. Our assumptions on the $V_j$'s imply that this supremum is actually achieved at some unique vector $\xi^*\in K_2$ and that there is a unique ``flat stretch'' at which this supremum is attained (the last assertion follows form (\ref{ec24})).

Suppose, without loss of generality, that the maximum value is achieved in an interval of the form $[a_k,a_{k+1}\land C)$ for a unique $k\in\left\{1,\ldots, N_r\right\}$. Now, write $b_0 = 0$; $b_j=\frac{ a_j + C\land a_{j+1}}{2}$ for $1 \leq j \leq N_r$; and $b_j = \frac{a_j + (-C)\lor a_{j-1}}{2}$ for $-N_l \leq j \leq -1$. Note that the $b_j$'s (for any value of $\xi\in K_2$) are continuity points of both $\psi_0$ and $\tilde{\psi}_0$.

Let $\kappa = \min_{-N_l \leq j \leq N_r+1} (C \land a_j - (-C) \lor a_{j-1})$ be the length of the shortest stretch. Take $0 < \eta, \delta < \kappa/4$. Considering the convergence of the jumps of $\psi_n$ to those of $\psi_0$, there is  $N\in\mathbb{N}$ such that for any $n \geq N$, the following two statements hold:
\begin{enumerate}[(a)]
\item Consider $\rho > 0$ such that if $\interleave\lambda\interleave_{K_1}<\rho$, then \[\sup\left\{ |s - \lambda(s)| : s\in [-C,C]\right\}<\delta.\] The existence of such $\rho$ follows from Lemma \ref{l8}. By the convergence of $\psi_n$ to $\psi_0$ in the Skorohod topology, there exists $\lambda_n \in \Lambda_{K_1}$ such that $\interleave\lambda_n\interleave_{K_1} < \rho$ and \[\displaystyle \sup_{(t,\xi)\in K_1\times K_2} \left\{ |\psi_n(\lambda_n(t),\xi) - \psi_0(t,\xi)|\right\} < \eta.\]
\item For any $1\leq j\leq N_r$ (respectively, $j=0$, $-N_l\leq j \leq -1$), $b_j$ lies somewhere inside the interval $\left(a_{n,j} + \delta, C\land a_{n,j+1} - \delta\right)$ (respectively $\left(a_{n,-1} + \delta, \right.$ $\left. a_{n,1} - \delta\right)$, $\left((-C)\lor a_{n,j-1} + \delta, a_{n,j} - \delta \right)$). This follows from what was proven in the first two paragraphs of this proof.
\end{enumerate}
From (a) we see that $|\lambda_n (b_j) - b_j|< \delta$ for all $-N_l \leq j \leq N_r$. But (b) and the size of $\delta$ in turn imply that $b_j$ and $\lambda_n(b_j)$ belong to the same ``flat stretch'' of $\psi_n$ and thus $\psi_n(\lambda_n (b_j),\xi) = \psi_n(b_j,\xi) = V_{n,j}(\xi)$ for all $\xi\in K_2$ and all $-N_l \leq j \leq N_r$. Considering again (b) and the second inequality in (a), we conclude that $\left\|V_{n,j} - V_j\right\|_{K_2} < \eta$ for all $-N_l \leq j \leq N_r$ and all $n\geq N$. Hence, all the sequences $(V_{n,j})_{n=1}^\infty$ converge uniformly in $K_2$ to their corresponding $V_j$. Consequently:
\begin{eqnarray}
\max_{\substack{-N_l \leq j \leq N_r \\ j\neq k}}\left\{\sup_{\xi\in K_2} V_{n,j}(\xi)\right\}
&\longrightarrow&  \max_{\substack{-N_l \leq j \leq N_r \\ j\neq k}}\left\{\sup_{\xi\in K_2} V_{j}(\xi)\right\}, \nonumber\\
\max_{\xi\in K_2}\left\{ V_{n,k} (\xi) \right\}&\longrightarrow& \max_{\xi\in K_2}\left\{ V_{k}(\xi)\right\}  = V_k (\xi^*), \nonumber\\
\argmax_{\xi\in K_2}\left\{ V_{n,k} (h_1,h_2) \right\}&\longrightarrow& \argmax_{\xi\in K_2}\left\{ V_{k}(\xi)\right\}  = \xi^*, \nonumber\\
 \lsup_{n\rightarrow\infty} \max_{\substack{-N_l \leq j \leq N_r \\ j\neq k}}\left\{\sup_{\xi\in K_2} V_{n,j}(\xi)\right\} & < & \linf_{n\rightarrow\infty} \max_{\xi\in K_2}\left\{ V_{n,k}(\xi)\right\}. \nonumber
\end{eqnarray}
The above, together with (\ref{ec24}) and the fact that $a_{n,k}\rightarrow a_k$ and $a_{n,k+1}\rightarrow a_{k+1}$, imply that
\begin{itemize}
\item[] $\displaystyle \sargmax_{x\in K}\{\psi_n(x)\}\rightarrow (\xi^*,a_k) = \sargmax_{x\in K}\{\psi_0(x)\}$
\item[] $\displaystyle \largmax_{x\in K}\{\psi_n(x)\}\rightarrow (\xi^*,a_{k+1}) = \largmax_{x\in K}\{\psi_0(x)\}$
\end{itemize}
as $n\rightarrow\infty$.
\end{proof}

We now present a version of the previous result but for random elements in $\mathcal{D}_K^0$. To prove it, we will use Lemma 4.2 in \cite{prarao1969}. In the remaining of the paper we will use the symbol $\rightsquigarrow$ to represent weak convergence.

\begin{lemma}\label{l10}
$\\$
Consider the random vectors $\{W_{n\epsilon},W_n,W_\epsilon\}_{\epsilon\geq 0}^{n\in\mathbb{N}}$ and $W$. Suppose that the following conditions hold:
\begin{enumerate}[(i)]
\item $\displaystyle \lim_{\epsilon\rightarrow 0}\lsup_{n\rightarrow\infty}\p{W_{n\epsilon}\neq W_n} = 0$,
\item $\displaystyle \lim_{\epsilon\rightarrow 0}\p{W_\epsilon\neq W} = 0$,
\item $W_{n\epsilon}\rightsquigarrow W_\epsilon$ (as $n\rightarrow \infty$) for every $\epsilon>0$.
\end{enumerate}
Then, $W_n\rightsquigarrow W$.
\end{lemma}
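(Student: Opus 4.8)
The plan is to prove Lemma \ref{l10} by a standard ``$\epsilon$-approximation'' argument for weak convergence, using the Portmanteau theorem together with the fact that total variation bounds on the difference of two random vectors control the difference of their laws. First I would fix a bounded Lipschitz (or merely bounded continuous) test function $h$ with, say, $|h|\leq 1$, and aim to show $\e{h(W_n)}\to\e{h(W)}$; by the Portmanteau theorem this suffices for $W_n\rightsquigarrow W$. The triangle inequality gives
\[
\left|\e{h(W_n)} - \e{h(W)}\right| \leq \left|\e{h(W_n)} - \e{h(W_{n\epsilon})}\right| + \left|\e{h(W_{n\epsilon})} - \e{h(W_\epsilon)}\right| + \left|\e{h(W_\epsilon)} - \e{h(W)}\right|.
\]
The first term is bounded by $2\,\p{W_{n\epsilon}\neq W_n}$ since on the event $\{W_{n\epsilon}=W_n\}$ the integrands agree; the third term is bounded by $2\,\p{W_\epsilon\neq W}$ for the same reason. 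The middle term tends to $0$ as $n\to\infty$ for each fixed $\epsilon>0$ by hypothesis (iii) and another application of Portmanteau.

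The order of quantifiers is the only delicate point, so the key step is to take the limits in the right sequence. Given $\eta>0$, I would first use hypotheses (i) and (ii) to choose $\epsilon>0$ small enough that $\p{W_\epsilon\neq W} < \eta$ and $\lsup_{n\to\infty}\p{W_{n\epsilon}\neq W_n} < \eta$. With this $\epsilon$ now frozen, I take $\lsup_{n\to\infty}$ on both sides of the displayed inequality: the middle term vanishes by (iii), the third term contributes at most $2\eta$, and the first term contributes at most $2\lsup_{n}\p{W_{n\epsilon}\neq W_n} \leq 2\eta$. Hence $\lsup_{n\to\infty}\left|\e{h(W_n)} - \e{h(W)}\right| \leq 4\eta$, and since $\eta$ was arbitrary the left side is $0$, giving $\e{h(W_n)}\to\e{h(W)}$.

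The main (and essentially only) obstacle is bookkeeping the two-stage limit correctly: one must not try to send $n\to\infty$ before fixing $\epsilon$, because hypothesis (i) only controls the $\limsup$ over $n$ of $\p{W_{n\epsilon}\neq W_n}$ after $\epsilon$ is sent to $0$. Freezing $\epsilon$ first, exploiting (iii) for that fixed $\epsilon$, and only afterwards letting $\eta\to0$ resolves this. A minor technical remark is that one should check the random vectors are all defined on a common probability space (or at least that the joint laws needed in (i) and (ii) make sense), which is implicit in the statement; with that in hand, the argument above applies verbatim and is exactly the content of Lemma 4.2 in \cite{prarao1969}, so one may alternatively simply invoke that result.
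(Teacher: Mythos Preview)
Your argument is correct. Note, however, that the paper does not actually supply its own proof of this lemma: it merely states the result and attributes it to Lemma~4.2 in \cite{prarao1969}. Your Portmanteau argument is precisely the standard way to establish that cited result, and you yourself observe this at the end of your write-up, so your proposal both subsumes and agrees with the paper's approach.
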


In the next theorem we will be taking the sargmax and largmax functionals over rectangles that may not be compact. When this happens, we say that these functionals are {\it well defined} if there is an element in the corresponding rectangle satisfying conditions $(i)-(iii)$ defining the smallest and largest argmax functionals (see Definition \ref{sargmax}). If we are given a rectangle $\Theta\subset\mathbb{R}^d$ which can be written as the Cartesian product of possibly unbounded closed intervals, we will denote by $\mathcal{D}_\Theta$ the collection of functions $f:\Theta\rightarrow\mathbb{R}$ whose restrictions to all compact rectangles $K\subset\Theta$ belong to $\mathcal{D}_K$.

\begin{thm}\label{t2}
$\\$
Assume that $K=K_1\times K_2$ is a closed rectangle in $\mathbb{R}^d$ and that $0\in K_1^\circ$. Let $(\Omega,\mathcal{F},\mathbf{P})$ be a probability space and let $\left(\Psi_n,\Gamma_n\right)_{n=1}^\infty$, $(\Psi_0,\Gamma_0)$ be random elements taking values in $\mathcal{D}_K^0 \times \mathcal{S}_{K_1}$ such that $\Psi_n$ satisfies (\ref{ec58}) for the sequence of jumps of $\Gamma_n$ for any $n\geq 0$, almost surely. Moreover, suppose that, with probability one, we have that: $\Psi_0$ satisfies (\ref{ec24}); $\Gamma_0$ has no fixed time of discontinuity; the sargmax and largmax functionals over $K$ are finite for $\Psi_0$ (this assumption is essential as $K$ is not necessarily compact). If the following hold:
\begin{enumerate}[(i)]
\item For every compact subinterval $B_1\subset K_1$ and compact sub-rectangle $B:=B_1\times B_2\subset K$ we have $(\Psi_n,\Gamma_n)\rightsquigarrow(\Psi_0,\Gamma_0)$ on $\mathcal{D}_B\times\mathcal{D}_{B_1}$;
\item $\displaystyle \left(\sargmax_{\theta\in K}\{\Psi_n(\theta)\},\largmax_{\theta\in K}\{\Psi_n(\theta)\}\right) = O_\mathbf{P} (1)$;
\end{enumerate}
then we also have
\begin{equation}\nonumber
\left(\sargmax_{\theta\in K}\{\Psi_n(\theta)\},\largmax_{\theta\in K}\{\Psi_n(\theta)\}\right)\rightsquigarrow \left(\sargmax_{\theta\in K}\{\Psi_0(\theta)\},\largmax_{\theta\in K}\{\Psi_0(\theta)\}\right).
\end{equation}
\end{thm}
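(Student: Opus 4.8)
The plan is to combine a truncation argument with Theorem~\ref{t1} and the auxiliary Lemma~\ref{l10}. Pick a sequence of compact sub-rectangles $B_m = B_{1,m}\times B_{2,m}\subset K$ that is strictly increasing (each $B_m$ contained in the relative interior of $B_{m+1}$), with $B_m\uparrow K$ and $0\in B_{1,1}^\circ$, and chosen so that for every $m$ the two endpoints of $B_{1,m}$ are \emph{not} fixed times of discontinuity of $\Gamma_0$ --- possible because, by hypothesis, $\Gamma_0$ has no fixed time of discontinuity, so $\mathbf{P}(\Gamma_0 \text{ has a jump at } t)=0$ for each fixed $t$, and discarding the null set attached to each of the countably many chosen endpoints costs nothing. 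Set $\epsilon_m = 1/m$ and, in the notation of Lemma~\ref{l10},
\[
W_{n\epsilon_m}:=\Big(\sargmax_{\theta\in B_m}\{\Psi_n(\theta)\},\ \largmax_{\theta\in B_m}\{\Psi_n(\theta)\}\Big),\qquad
W_n:=\Big(\sargmax_{\theta\in K}\{\Psi_n(\theta)\},\ \largmax_{\theta\in K}\{\Psi_n(\theta)\}\Big),
\]
with $W_{\epsilon_m}$ and $W$ defined the same way using $\Psi_0$. (As in the rest of the paper we use freely the Borel measurability of $\sargmax$ and $\largmax$ on $\mathcal{D}_B$.) The conclusion of Lemma~\ref{l10} applied to $\{W_{n\epsilon_m},W_n,W_{\epsilon_m}\}$, $W$ is exactly the assertion of the theorem, so it suffices to verify its three hypotheses.

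Hypotheses (i) and (ii) of Lemma~\ref{l10} rest on an elementary restriction fact: if $g\in\mathcal{D}_K$ and the smallest (respectively largest) maximizer of $g$ over $K$ lies in the relative interior of a closed sub-rectangle $B\subset K$, then it coincides with the smallest (respectively largest) maximizer of $g|_B$ over $B$ --- indeed an interior global maximizer forces $\sup_B g=\sup_K g$, and every $B$-maximizer is then a $K$-maximizer (each quadrant limit taken within $B$ is a quadrant limit within $K$), so the extremal ones agree. By strict nesting, $\{W\in B_m\times B_m\}$ puts both coordinates of $W$ in the relative interior of $B_{m+1}$, so the restriction fact (with $B=B_{m+1}$) gives $W_{\epsilon_{m+1}}=W$ there; hence $\{W_{\epsilon_{m+1}}\neq W\}\subseteq\{W\notin B_m\times B_m\}$ and, likewise, $\{W_{n\epsilon_{m+1}}\neq W_n\}\subseteq\{W_n\notin B_m\times B_m\}$. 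Since the $\sargmax$ and $\largmax$ of $\Psi_0$ over $K$ are almost surely finite, $\mathbf{P}(W\notin B_m\times B_m)\to 0$, giving Lemma~\ref{l10}(ii); and since $W_n=O_{\mathbf{P}}(1)$ by hypothesis~(ii) of the theorem, $\lim_m\limsup_n\mathbf{P}(W_n\notin B_m\times B_m)=0$, giving Lemma~\ref{l10}(i).

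For Lemma~\ref{l10}(iii) fix $m$. By hypothesis~(i) of the theorem, $(\Psi_n|_{B_m},\Gamma_n|_{B_{1,m}})\rightsquigarrow(\Psi_0|_{B_m},\Gamma_0|_{B_{1,m}})$ on $\mathcal{D}_{B_m}\times\mathcal{D}_{B_{1,m}}$, which is Polish (Lemma~\ref{l6} applied to $B_m$ and to $B_{1,m}$). By the Skorohod representation theorem there is a probability space carrying copies $(\hat\Psi_n^m,\hat\Gamma_n^m)$, $n\geq 0$, with the correct marginal laws and $(\hat\Psi_n^m,\hat\Gamma_n^m)\to(\hat\Psi_0^m,\hat\Gamma_0^m)$ almost surely. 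Almost surely every pair lies in $\mathcal{D}_{B_m}^0\times\mathcal{S}_{B_{1,m}}$ and $\hat\Psi_n^m$ satisfies \eqref{ec58} for the jump sequence of $\hat\Gamma_n^m$ (these properties hold almost surely for the original $(\Psi_n,\Gamma_n)$ restricted to $B_m, B_{1,m}$, hence for their distributional copies; intersect over the countably many $n$). Moreover, by the standing hypotheses of Theorem~\ref{t2}, almost surely $\hat\Psi_0^m$ has coordinate functions each with a unique maximizer on $B_{2,m}$ and satisfying \eqref{ec24}, and --- because the endpoints of $B_{1,m}$ avoid the fixed discontinuities of $\Gamma_0$ --- $\hat\Psi_0^m$ has no jumps at those endpoints. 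Thus Theorem~\ref{t1} (in its $d\geq 2$ form, or its one-dimensional version when $d=1$) applies along almost every sample path and yields $\big(\sargmax_{\theta\in B_m}\{\hat\Psi_n^m(\theta)\},\largmax_{\theta\in B_m}\{\hat\Psi_n^m(\theta)\}\big)\to\big(\sargmax_{\theta\in B_m}\{\hat\Psi_0^m(\theta)\},\largmax_{\theta\in B_m}\{\hat\Psi_0^m(\theta)\}\big)$ almost surely; since these values stay in the compact set $B_m\times B_m$, this is weak convergence, i.e.\ $W_{n\epsilon_m}\rightsquigarrow W_{\epsilon_m}$. Lemma~\ref{l10} then delivers $W_n\rightsquigarrow W$.

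The heart of the argument, and the step most at risk of a gap, is Lemma~\ref{l10}(iii): the $\sargmax$ functional is \emph{not} Skorohod-continuous at a limit possessing flat stretches unless the associated pure jump functions are also kept under control, which is precisely why Theorem~\ref{t1} must be invoked jointly with the Skorohod-representation device, and why one must check that the weak limit almost surely lands in the ``good'' set on which Theorem~\ref{t1} provides convergence --- a requirement supplied exactly by the standing hypotheses of Theorem~\ref{t2} (that $\Psi_0$ satisfies \eqref{ec24} and that $\Gamma_0$ has no fixed time of discontinuity). The remainder is bookkeeping around the non-compactness of $K$: the nested truncation, the placement of the new faces of $B_{1,m}$ off the discontinuities of $\Gamma_0$, and the restriction fact that ties the events in Lemma~\ref{l10}(i)--(ii) to the tightness hypothesis~(ii) of the theorem.
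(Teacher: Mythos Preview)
Your argument is correct and follows essentially the same route as the paper's own proof: truncate to compact sub-rectangles, apply Theorem~\ref{t1} via the Skorohod representation theorem to obtain weak convergence of the truncated $(\sargmax,\largmax)$ pairs, and then invoke Lemma~\ref{l10} together with the tightness hypothesis~(ii) to remove the truncation. Your version is in fact more careful than the paper's terse proof --- you make explicit the ``restriction fact'' that justifies $\{W_{n\epsilon_{m+1}}\neq W_n\}\subseteq\{W_n\notin B_m\times B_m\}$, and you spell out why the endpoints of $B_{1,m}$ can be taken to avoid the fixed discontinuities of $\Gamma_0$ (needed for the endpoint hypothesis of Theorem~\ref{t1}); the paper absorbs both points into one-line claims.
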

\begin{proof}
Consider $C>0$ and let
\begin{eqnarray*}
\phi_n&:=&\left(\sargmax_{\theta\in K}\{\Psi_n(\theta)\},\largmax_{\theta\in K}\{\Psi_n(\theta)\}\right)\\
\phi_{n,C}&:=&\left(\sargmax_{\theta\in [-C,C]^d\cap K}\{\Psi_n(\theta)\},\largmax_{\theta\in [-C,C]^d\cap K}\{\Psi_n(\theta)\}\right),
\end{eqnarray*}
for all $n\geq 0$. To prove the result, we will apply Theorem \ref{t1} and Lemma \ref{l10}. Using the notation of the latter, set $\epsilon = \frac{1}{C}$, $W_{n\epsilon} = \phi_{n,C}$ for $n\geq 1$, $W_\epsilon = \phi_{0,C}$, $W_n = \phi_n$ for $n\geq 1$ and $W=\phi_0$. From $(ii)$ we see that $\displaystyle \lim_{\epsilon\rightarrow 0} \lsup_{n\rightarrow\infty} \p{W_{n\epsilon} \neq W_n} = 0$. Our assumptions on $\Psi_0$ and $\Gamma_0$ imply that $\displaystyle \lim_{\epsilon\rightarrow 0} \p{ W_\epsilon \neq W} = 0$. Finally, Theorem \ref{t1} and an application of Skorohod's Representation Theorem (see either Theorem 1.8, page 102 in \cite{ek} or Theorems 1.10.3 and 1.10.4, pages 58 and 59 in \cite{vw}) show that $W_{n\epsilon} \rightsquigarrow W_{\epsilon}$ and hence, from Lemma \ref{l10}, we conclude that $\phi_n \rightsquigarrow \phi_0$. \end{proof}

\section{On the necessity of the convergence of the associated pure jump processes}\label{s4s1}
Condition (i) in Theorem \ref{t2} involves the joint convergence of the processes whose maximizers are being considered and their associated pure jump processes. One may ask whether or not this condition is actually necessary for the weak convergence of the corresponding smallest maximizers. A simple counterexample shows that such a condition is indeed essential to guarantee the desired weak convergence under the assumptions of Theorem \ref{t2}.

Let $\Psi$ be a two-sided, right-continuous Poisson process and $T_{\pm1}:=\pm\inf\{t>0: \Psi(\pm t)>0\}$. Consider the following $\mathcal{D}_\mathbb{R}$-valued random elements: $\Psi_0:= -\Psi$ and $\Psi_n = \Psi_0 + \frac{1}{n}\ind{\left[\frac{1}{2}T_{-1},\frac{1}{2}T_1\right)}$. Then, $\Psi_n\rightsquigarrow\Psi$ in $\mathcal{D}_I$ for every compact interval $I$ (in fact, the weak convergence holds in $\mathcal{D}_\mathbb{R}$ with the corresponding Skorohod topology). However,
\[\displaystyle \left(\sargmax_\mathbb{R}\{\Psi_n\},\largmax_\mathbb{R}\{\Psi_n\}\right) = \frac{1}{2}\left(\sargmax_\mathbb{R}\{\Psi_0\},\largmax_\mathbb{R}\{\Psi_0\}\right),\]
for all $n\in\mathbb{N}$. It is easily seen that all the conditions of Theorem 3.2 hold, with the exception of (i). Hence, the weak convergence of the processes $\Psi_n$ alone is not enough to guarantee weak convergence of the corresponding maximizers.

\section{Applications}\label{s4}
\subsection{Stochastic design change-point regression}\label{s4s4}
We start by analyzing the example of the least squares change-point estimator given by (\ref{ecfinal}) in the Introduction. Assume that we are given an i.i.d. sequence of random vectors $\left\{X_n=(Y_n,Z_n)\right\}_{n=1}^\infty$ defined on a probability space $\left(\Omega,\mathcal{A},\mathbf{P}\right)$ having a common distribution $\mathbb{P}$ satisfying (\ref{ec1ex1}) for some parameter $\theta_0 :=(\zeta_0,\alpha_0, \beta_0) \in \Theta := [c_1,c_2]\times\mathbb{R}^2$. Suppose that $Z$ has a uniformly bounded, strictly positive density $f$ (with respect to the Lebesgue measure) on $[c_1,c_2]$ such that $\inf_{|z - \zeta_0| \le \eta} f(z) > \kappa > 0$ for some $\eta >0$ and that $\mathbb{P}(Z<c_1)\land \mathbb{P}(Z>c_2) > 0$. For $\theta = (\zeta,\alpha,\beta) \in \Theta$, $x=(y,z) \in \mathbb{R}^2$ write
\begin{equation}\nonumber
\function{m_\theta}{x} := -\left(y - \alpha \mathbf{1}_{z\leq \zeta} - \beta \mathbf{1}_{z> \zeta}\right)^2,
\end{equation}
and $\mathbb{P}_n$ for the empirical measure defined by $X_1,\ldots,X_n$. Note that $\function{M_n}{\theta} := - \mathbb{P}_n [m_\theta]$ and recall the definition of $
\hat{\theta}_n$.

The asymptotic properties of this estimator are well-known and have been deduced by several authors. They are available, for instance, in \cite{koss} or \cite{seseboot}. It follows from Proposition 3.2 in \cite{seseboot} that $\sqrt{n}(\hat{\alpha}_n - \alpha_0) = \function{O_{\mathbf{P}}}{1}$, $\sqrt{n}(\hat{\beta}_n - \beta_0) = \function{O_{\mathbf{P}}}{1}$ and $n (\hat{\zeta}_n - \zeta_0) = \function{O_{\mathbf{P}}}{1}$.

For $h = (h_1,h_2,h_3) \in \mathbb{R}^3$, let $\vartheta_{n,h} := \theta_0 + \left(\frac{h_1}{n},\frac{h_2}{\sqrt{n}},\frac{h_3}{\sqrt{n}}\right)$ and
\begin{eqnarray}
\hat{E}_n(h) := n\mathbb{P}_n \left[m_{\vartheta_{n,h}} - m_{\theta_0} \right] \nonumber.
\end{eqnarray}
A consequence of the rate of convergence result in \cite{seseboot} is that with probability tending to one, we have
$$\hat{h}_n := \sargmax_{h \in \mathbb{R}^3} \hat E_n(h) = \left(n(\hat{\zeta}_n - \zeta_0),\sqrt{n}(\hat{\alpha}_n - \alpha_0), \sqrt{n}(\hat{\beta}_n - \beta_0) \right).$$
Write $\hat{J}_n$ for the pure jump process associated with $\hat{E}_n$. It is shown in Lemma 3.3 of \cite{sesetec} that
\begin{enumerate}[(a)]
\item $(\hat{E}_n,\hat{J}_n)\rightsquigarrow (E^*,J^*)$ in $\mathcal{D}_K\times \mathcal{S}_I$,
\end{enumerate}
on every compact rectangle $K=I\times A\times B\subset\mathbb{R}^3$ for some process $E^*\in\mathcal{D}_{\mathbb{R}^3}$ with an associated pure jump process $J^*$. Then, an application of Theorem \ref{t2} shows that
$$\displaystyle \hat{h}_n = \left(n(\hat{\zeta}_n - \zeta_0),\sqrt{n}(\hat{\alpha}_n - \alpha_0), \sqrt{n}(\hat{\beta}_n - \beta_0) \right) \rightsquigarrow \sargmax_{h\in\mathbb{R}^3}\{E^*(h)\}.$$
It must be noted that the results in \cite{seseboot} are stated in terms of a triangular array of random vectors that satisfy some regularity conditions. Even in such generality, Proposition 3.3 in \cite{seseboot} can be derived from Theorem \ref{t2}.

We would like to point out that the derivation of the asymptotic distribution of this estimator can also be found in \cite{koss}. The arguments there can be modified to obtain the result from an application of Theorem \ref{t2}.

\subsection{Estimation in a Cox regression model with a change-point in time}\label{s4s2}
Define $\Theta:=(0,1)\times\mathbb{R}^{p+2q}$ for given $p,q\in\mathbb{N}$. For $\theta=(\tau,\xi)=(\tau,\alpha,\beta,\gamma)\in\Theta=(0,1)\times \mathbb{R}^p\times \mathbb{R}^q\times \mathbb{R}^q$ consider a survival time $T^0$, a censoring time $C$ and covariate c\'agl\'ad (left-continuous with right-hand side limits) $\mathbb{R}^{p+q}$-valued process $Z=(Z_1,Z_2)$ where the sample paths of $Z_1$ and $Z_2$ live in $\mathbb{R}^p$ and $\mathbb{R}^q$, respectively. Assume that $C$ and $Z$ have laws $G$ and $H$, respectively. Note that $G$ is a distribution on the nonnegative real line and $H$ a probability measure on the space of left continuous processes with right-hand side limits. In our Cox model with a change-point in time we make the additional assumption that, conditionally on $Z$, the hazard function of the survival time is given by:
\begin{eqnarray}
\lambda(t|Z) &:=& \lim_{\Delta t\downarrow 0}\frac{\p{t\leq T^0 < t+\Delta t|T^0\geq t;\ Z(s),\ 0\leq s\leq t}}{\Delta t}\nonumber\\
 &=& \lambda(t)e^{\alpha\cdot Z_1(t) + (\beta+\gamma\ind{t>\tau})\cdot Z_2(t)} \nonumber
\end{eqnarray}
where $\lambda$ is the {\it baseline hazard function} and $\cdot$ denotes the standard inner product on Euclidian spaces. We write $\mathbb{P}_{\theta,\lambda,G,H}$ for the law of $(T^0,C,Z)$. We would like to point out that we assume that $G$ and the finite dimensional distributions of $Z$ are all {\it continuous}.

Suppose that there is a random sample \[(T^0_1,C_1,Z_{1,1},Z_{2,1}),\ldots,(T^0_n,C_n,Z_{1,n},Z_{2,n})
\stackrel{i.i.d.}{\sim}\mathbb{P}_{\theta_0,\lambda_0,G_0,H_0}\] from which we are only able to observe $Z_{1,j}$, $Z_{2,j}$, $\Delta_j := \ind{T^0_j \leq C_j}$ and $T_j := T^0_j \land C_j$ for $j=1,\ldots, n$. The goal is to estimate the change-point $\tau_0\in(0,1)$ given these observations.

A standard method of estimation in this setting is via Cox's partial likelihood, in which case the likelihood and log-likelihood functions are given by
{\small
\begin{eqnarray}\nonumber
L_n(\tau,\alpha,\beta,\gamma) &:=& \prod_{\begin{subarray}{c} 1\leq k\leq n\\ T_k^0\leq C_k \end{subarray}}\frac{
e^{\alpha\cdot Z_{1,k}(T^0_k) + (\beta+\gamma\ind{T_k^0>\tau})\cdot Z_{2,k}(T^0_k)}}{\sum_{\{1\leq j\leq n:\ T_k^0\leq T_j^0\land C_j \}}e^{\alpha\cdot Z_{1,j}(T^0_k) + (\beta+\gamma\ind{T_k^0>\tau})\cdot Z_{2,j}(T^0_k)}},\\
l_n(\theta)&:=& \function{\log}{L_n(\tau,\xi)} = \function{\log}{L_n(\tau,\alpha,\beta,\gamma)}.\nonumber
\end{eqnarray}
}
In this case, the maximum partial likelihood estimator of the change-point and the covariate multipliers is given by
\begin{equation}\nonumber
\hat{\theta}_n = (\hat{\tau}_n,\hat{\xi}_n) = (\hat{\tau}_n,\hat{\alpha}_n,\hat{\beta}_n,\hat{\gamma}_n) := \sargmax_{\theta\in\Theta} \{ l_n(\theta)\}.
\end{equation}

\cite{pons02} derived the asymptotics for this estimator. For $u=(u^1,u^2,\ldots,u^{1+p+2q})=(u^1,v)\in\mathbb{R}^{1+p+2q}$ define $\theta_{n,u}=\left(\tau_0+\frac{u^1}{n},\xi_0+\frac{v}{\sqrt{n}}\right)$. Then, under some regularity conditions, Theorem 2 in \cite{pons02} shows that
{\small
\begin{equation}\nonumber
\left(n(\hat{\tau}_n - \tau_0),\sqrt{n}(\hat{\xi}_n - \xi_0)\right) = \sargmax_{u\in\mathbb{R}^{1+p+2q}:\ \theta_{n,u}\in\Theta}\{l_n(\theta_{n,u})-l_n(\theta_0)\} = O_\mathbf{P}(1).
\end{equation}
}
It can also be inferred from Proposition 3 and Theorem 3 of the same paper that $\Psi_n:=l_n(\theta_{n,u})-l_n(\theta_0)\rightsquigarrow \Psi$ on $\mathcal{D}_K$ for every compact rectangle $K\subset\mathbb{R}^{1+p+2q}$, where $\Psi$ is a stochastic process of the form
\begin{equation}\label{ec10}
\Psi(u^1,v) = Q(u^1) + v\cdot \tilde{W} - \frac{1}{2}v\tilde{I}\cdot v,
\end{equation}
with $Q$ being a two-sided, compound Poisson process, $\tilde{W}$ a Gaussian random variable independent of $Q$ and $\tilde{I}$ some positive definite matrix on $\mathbb{R}^{(p+2q)\times(p+2q)}$. For a detailed description of $Q$, $\tilde{W}$ and $\tilde{I}$ we refer the reader to Section 4 of \cite{pons02}.

If one defines $\Gamma_n$ and $\Gamma$ to be the pure jump processes associated with $\Psi_n$ and $\Psi$, respectively, it can be shown, using similar techniques as in the proof of Theorem 3 of \cite{pons02}, that $(\Psi_n,\Gamma_n)\rightsquigarrow(\Psi,\Gamma)$ on $\mathcal{D}_B\times\mathcal{D}_{B_1}$ for every compact subinterval $B_1\subset \mathbb{R}$ and compact rectangle $B:=B_1\times B_2\subset \mathbb{R}^{1+p+2q}$. Hence, Theorem \ref{t2} can be applied in this situation to conclude that
\[ \left(n(\hat{\tau}_n - \tau_0),\sqrt{n}(\hat{\xi}_n - \xi_0)\right)\rightsquigarrow\sargmax_{u\in\mathbb{R}^{1+p+2q}}\{\Psi(u)\}.\]
It must be noted that the proof of Theorem 4 in \cite{pons02} makes no mention of the pure jump processes $\Gamma_n$ and $\Gamma$. On the second sentence of this proof, the author claims that the asymptotic distribution follows just from the weak convergence of the processes $\Psi_n$. As we saw in Section \ref{s4s1} this fact alone is not enough to conclude the weak convergence of the smallest maximizers. Thus, the argument given in this section completes the mentioned proof in \cite{pons02}.

\subsection{Estimating a change-point in a Cox regression model according to a threshold in a covariate}\label{s4s3}
We will now discuss another application from survival analysis. Consider again a Cox regression model but now with a covariate process of the form $Z=(Z_1,Z_2,Z_3)$ where $Z_1$ and $Z_2$ are as in Section \ref{s4s2} and $Z_3$ is a continuous random variable in $\mathbb{R}$. We will denote the survival and censoring times  as in Section \ref{s4s2}. We are now concerned with a hazard function of the form
\begin{equation}
\lambda(t|Z) = \lambda(t)e^{\alpha\cdot Z_1(t) + \beta\cdot Z_2(t)\ind{Z_3\leq\zeta} + \gamma\cdot Z_2(t)\ind{Z_3>\zeta}}, \nonumber
\end{equation}
for $\alpha\in\mathbb{R}^q$, $\beta,\gamma\in\mathbb{R}^q$ and some $\zeta\in I$ where $I$ is a closed interval entirely contained in the interior of the support of $Z_3$. We now consider the parameter space $\Theta:=I\times\mathbb{R}^{p+2q}$ and we write $\theta=(\zeta,\xi):=(\zeta,\alpha,\beta,\gamma)\in\Theta$. The partial likelihood and log-likelihood functions are now given by
{\small
\begin{eqnarray}\nonumber
L_n(\zeta,\alpha,\beta,\gamma) &:=& \prod_{\begin{subarray}{c} 1\leq k\leq n\\ T_k^0\leq C_k \end{subarray}}\frac{
e^{\alpha\cdot Z_{1,k}(T^0_k) + \beta\cdot Z_{2,k}(T^0_k)\ind{Z_{3,k}\leq\zeta} + \gamma\cdot Z_{2,k}(T^0_k)\ind{Z_{3,k}>\zeta}}}{\sum_{\{1\leq j\leq n:\ T_k^0\leq T_j^0\land C_j \}}e^{\alpha\cdot Z_{1,j}(T^0_k) + \beta\cdot Z_{2,j}(T^0_k)\ind{Z_{3,j}\leq\zeta} + \gamma\cdot Z_{2,j}(T^0_k)\ind{Z_{3,j}>\zeta}}},\\
l_n(\theta)&:=& \function{\log}{L_n(\zeta,\xi)} = \function{\log}{L_n(\zeta,\alpha,\beta,\gamma)}.\nonumber
\end{eqnarray}
}
As before, we assume that the observations come from a model with some specific value $\theta_0\in\Theta$. Following the notation of Section \ref{s4s2}, for $u=(u^1,u^2,\ldots,u^{1+p+2q})=(u^1,v)\in\mathbb{R}^{1+p+2q}$ define $\theta_{n,u}=\left(\zeta_0+\frac{u^1}{n},\xi_0+\frac{v}{\sqrt{n}}\right)$. Then, under some regularity conditions, Theorem 2 in \cite{pons} shows that
{\small
\begin{equation}\nonumber
\left(n(\hat{\zeta}_n - \zeta_0),\sqrt{n}(\hat{\xi}_n - \xi_0)\right) = \sargmax_{u\in\mathbb{R}^{1+p+2q}:\ \theta_{n,u}\in\Theta}\{l_n(\theta_{n,u})-l_n(\theta_0)\} = O_\mathbf{P}(1).
\end{equation}
}

Lemma 5 and Theorem 3 in \cite{pons} show that $\Psi_n:=l_n(\theta_{n,u})-l_n(\theta_0)\rightsquigarrow \Psi$ on $\mathcal{D}_K$ for every compact rectangle $K\subset\mathbb{R}^{1+p+2q}$, where $\Psi$ is another stochastic process of the form (\ref{ec10}) but with different two-sided, compound Poisson process $Q$, Gaussian random variable $\tilde{W}$ and positive definite matrix $\tilde{I}$. The details can be found in Section 4 of \cite{pons}.

Letting $\Gamma_n$ and $\Gamma$ to be the pure jump processes associated with $\Psi_n$ and $\Psi$, respectively, it can be shown that $(\Psi_n,\Gamma_n)\rightsquigarrow(\Psi,\Gamma)$ on $\mathcal{D}_B\times\mathcal{D}_{B_1}$ for every compact subinterval $B_1\subset \mathbb{R}$ and compact rectangle $B:=B_1\times B_2\subset \mathbb{R}^{1+p+2q}$. Hence, another application of Theorem \ref{t2} shows that
\[ \left(n(\hat{\tau}_n - \tau_0),\sqrt{n}(\hat{\xi}_n - \xi_0)\right)\rightsquigarrow\sargmax_{u\in\mathbb{R}^{1+p+2q}}\{\Psi(u)\}.\]
As in \cite{pons02}, the argument to derive the asymptotic distribution given in the proof of Theorem 5 lacks a proper discussion of the convergence of the associated pure jump processes. Therefore, the analysis just given can be seen as a complement to the proof of Theorem 5 in \cite{pons}.

More general models involving right censoring for survival times and a change-point based on a threshold in a covariate can be found in \cite{koso}. There, the change-point estimator also achieves a $n^{-1}$ rate of convergence. The asymptotic distribution of this estimator also corresponds to the smallest maximizer of a two-sided, compound Poisson process and can be deduced from an application of Theorem \ref{t2}. We would like to point out that the above authors omit a discussion about the associated pure jump processes. They claim the desired stochastic convergence follows from an application of Theorem 3.2.2 in \cite{vw} (see the last paragraph of the proof of Theorem 5 in page 985 of \cite{koso}), but this theorem cannot be applied as the maximizer of a compound Poisson process is not unique. Thus, a proper application of Theorem \ref{t2} would complete the argument in \cite{koso}.

\bibliography{Referencias}

\begin{thebibliography}{}

\bibitem[Billingsley, 1968]{bi}
Billingsley, P. (1968).
\newblock {\em Convergence of Probability Measures}.
\newblock John Wiley, New York, NY, USA.

\bibitem[Ethier and Kurtz, 2005]{ek}
Ethier, S. and Kurtz, T. (2005).
\newblock {\em Markov Processes, Characterization and Convergence}.
\newblock John Wiley \& Sons, New York, NY, USA.

\bibitem[Ferger, 2004]{fe04}
Ferger, D. (2004).
\newblock A continuous mapping theorem for the argmax-functional in the
  non-unique case.
\newblock {\em Statist. Neerlandica}, 48:83--96.

\bibitem[Kosorok, 2008]{koss}
Kosorok, M. (2008).
\newblock {\em Introduction to Empirical Processes and Semiparametric
  Inference}.
\newblock Springer, New York, NY, USA.

\bibitem[Kosorok and Song, 2007]{koso}
Kosorok, M. and Song, R. (2007).
\newblock Inference under right censoring for transformation models with a
  change-point based on a covariate threshold.
\newblock {\em Ann. Statist.}, 35:957--989.

\bibitem[Lan et~al., 2009]{lmm}
Lan, Y., Banerjee, M., and Michailidis, G. (2009).
\newblock Change-point estimation under adaptive sampling.
\newblock {\em Ann. Statist.}, 37:1752--1791.

\bibitem[Neuhaus, 1971]{neu}
Neuhaus, G. (1971).
\newblock On weak convergence of stochastic processes with multidimensional
  time parameter.
\newblock {\em Ann. Math. Statist.}, 42:1285--1295.

\bibitem[Pons, 2002]{pons02}
Pons, O. (2002).
\newblock Estimation in a cox regression model with a change-point at an
  unknown time.
\newblock {\em Statistics}, 36:101--124.

\bibitem[Pons, 2003]{pons}
Pons, O. (2003).
\newblock Estimation in a cox regression model with a change-point according to
  a threshold in a covariate.
\newblock {\em Ann. Statist.}, 31:442--463.

\bibitem[Prakasa~Rao, 1969]{prarao1969}
Prakasa~Rao, B. L.~S. (1969).
\newblock Estimation of a unimodal density.
\newblock {\em Sankhya}, 31:23--36.

\bibitem[Seijo and Sen, 2010]{seseboot}
Seijo, E. and Sen, B. (2010).
\newblock Change-point in stochastic design regression and the bootstrap.
\newblock To appear in {\it The Annals of Statistics}.

\bibitem[Seijo and Sen, 2011]{sesetec}
Seijo, E. and Sen, B. (2011).
\newblock Supplement to ``change-point in stochastic design regression and the
  bootstrap''.

\bibitem[Van~der Vaart and Wellner, 1996]{vw}
Van~der Vaart, A. and Wellner, J. (1996).
\newblock {\em Weak Convergence and Empirical Processes}.
\newblock Springer-Verlag, New York, NY, USA.

\end{thebibliography}
\bibliographystyle{apalike}

\end{document}